\theoremstyle{plain}\newtheorem{definition}{Definition}[section]
\theoremstyle{definition}\newtheorem{theorem}{Theorem}[section]
\theoremstyle{plain}\newtheorem{lemma}[theorem]{Lemma}
\theoremstyle{plain}\newtheorem{coro}[theorem]{Corollary}
\theoremstyle{plain}
\theoremstyle{remark}\newtheorem{remark}{Remark}[section]
\newcommand{\wgr}[1]{\textcolor{black}{#1}}
\newcommand{\wred}[1]{\textcolor{black}{#1}}
\newcommand{\Div}{\mathrm{div}\,}
\newcommand{\B}{\Big}
\newcommand{\be}{\begin{equation}}
\newcommand{\ee}{\end{equation}}
\newcommand{\ba}{\begin{aligned}}
	\newcommand{\ea}{\end{aligned}}
\providecommand{\bysame}{\leavevmode\hbox to3em{\hrulefill}\thinspace}
\newcommand{\f}{\frac}
\newcommand{\ben}{\begin{enumerate}}
	\newcommand{\een}{\end{enumerate}}
\newcommand{\ti}{\nabla}
\newcommand{\Rmnum}[1]{\expandafter\@slowromancap\romannumeral #1@}
\numberwithin{equation}{section}
\begin{document}
\title{  $\varepsilon$-regularity criteria in anisotropic Lebesgue spaces   and Leray's self-similar solutions to
		   the 3D Navier-Stokes equations }
		\author{Yanqing Wang\footnote{ Department of Mathematics and Information Science, Zhengzhou University of Light Industry, Zhengzhou, Henan  450002,  P. R. China Email: wangyanqing20056@gmail.com},\;~Gang Wu\footnote{School of Mathematical Sciences,  University of Chinese Academy of Sciences, Beijing 100049, P. R. China Email: wugang2011@ucas.ac.cn}  ~~and ~\,
	Daoguo Zhou\footnote{
	College of Mathematics and Informatics, Henan Polytechnic University, Jiaozuo, Henan 454000, P. R. China Email:
	zhoudaoguo@gmail.com }}
\date{}
\maketitle
\begin{abstract}
In this paper,
we establish  some   $\varepsilon$-regularity criteria in anisotropic Lebesgue spaces    for suitable weak solutions to the 3D Navier-Stokes equations as follows:  \be\ba &\limsup\limits_{\varrho\rightarrow0}
 \varrho^{1-\f{2}{p}-\sum\limits^{3}_{j=1}\f{1}{q_{j}}}
\|u\|_{L_{t}^{p}L^{\overrightarrow{q}}_{x}(Q(\varrho))} \leq\varepsilon,
~~\f{2}{p}+\sum\limits^{3}_{j=1}\f{1}{q_{j}} \wred{\leq2}~~~~~\text{with}~q_{j} > 1;\\&
\label{wwwolf0}
\sup_{-1\leq t\leq0}\|u\|_{L^{\overrightarrow{q}}(B(1))} < \varepsilon,~~\f{1}{q_{1}}+\f{1}{q_{2}}+\f{1}{q_{3}}\wred{<2\quad \text{with}\, 1<q_{j}<\infty;}\\&\|u \|_{L_{t}^{p}L^{\overrightarrow{q}}_{x}(Q(1))} +\|\Pi\|_{L^{1}(Q(1))}\leq\varepsilon,
\wred{\quad \f2p+\sum^{3}_{j=1}\f{1}{q_{j}} <2 ~~~\text{with}~~  1<q_{j}<\infty,}
\ea
\ee
 which  extends   the  previous results  in \cite{[TX],[GKT],[HWZ],[GP],[Wolf1],[CV],[CKN]}.

As an application, in the spirit of \cite{[CW]},
 we prove that there
  does not exist a nontrivial Leray's backward self-similar solution with profiles in $L^{\overrightarrow{p}}(\mathbb{R}^{3})$ with $\f{1}{p_{1}}+\f{1}{p_{2}}+\f{1}{p_{3}}<2$.
  This  generalizes the corresponding  results of \cite{[NRS],[CW],[Tsai],[GP2]}.
  \end{abstract}
	\noindent {\bf MSC(2000):}\quad 35B65, 35D30, 76D05 \\\noindent
	{\bf Keywords:} Navier-Stokes equations;  suitable  weak solutions; regularity; self-similar solutions;anisotropic Lebesgue spaces
	\section{Introduction}
	\label{intro}
	\setcounter{section}{1}\setcounter{equation}{0}
		We study  the following   incompressible Navier-Stokes equations in   three-dimensional space
	\be\left\{\ba\label{NS}
	&u_{t} -\Delta  u+ u\cdot\ti
	u+\nabla \Pi=0, ~~\Div u=0,\\
	&u|_{t=0}=u_0,
	\ea\right.\ee
	where $u $ stands for the flow  velocity field, the scalar function $\Pi$ represents the   pressure.   The
	initial  velocity $u_0$ satisfies   $\text{div}\,u_0=0$.
	
We are concerned with the regularity of suitable weak solutions satisfying local energy inequality to the Navier-Stokes  system \eqref{NS}.
 A point $(x,t)$ is said to be a regular point if $|u| $ is bounded at some neighbourhood of this point. Otherwise, $(x,t)$ is singular  point.
 The local energy inequality of \eqref{NS} is due to Scheffer in  \cite{[Scheffer1],[Scheffer2]}. In this direction, a  milestones  result that one dimensional Hausdorff measure of the possible space-time singular \wgr{points set} of suitable weak solutions to the 3D
Navier-Stokes equations is zero was  obtained by
 Caffarelli, Kohn and Nirenberg   in \cite{[CKN]}. This result relies on the following two $\varepsilon$-regularity criteria in \cite{[CKN]} \wgr{for  suitable weak solutions to}  \eqref{NS}.
 One   holds at one scale: $(0,0)$ is regular point provided \begin{equation}\label{CKN}		\|u\|_{L^{3}(Q(1))}+\|u\Pi\|_{L^1(Q(1))}+\|\Pi\|_{L_{t}^{5/4}L_{x}^{ 1}(Q(1))}
		\leq \varepsilon.
		\end{equation}
The other needs infinitely many scales and an alternative assumption of \eqref{CKN} is that    \be\label{ckn2}
  \limsup_{\varrho\rightarrow0} \varrho^{-\f{1}{2}} \|\nabla u\|_{L^{2}(Q(\varrho))}
  \leq \varepsilon.
  \ee
An alternative  condition of \eqref{ckn2} is \wred{due to} Tian and Xin \cite{[TX]}
		 \be\label{tx}
   \limsup_{\varrho\rightarrow0} \varrho^{-\f{2}{3}} \|  u\|_{L^{3}(Q(\varrho))}
  \leq \varepsilon.
  \ee	
		  Gustafson,   Kang and  Tsai     \cite{[GKT]} enhanced \eqref{ckn2} and \eqref{tx} to the following results
\begin{align}\label{tsai1}
&\limsup_{\varrho\to 0 }\,\, \varrho^{1- \frac 2p- \frac 3 q}
\|u-\overline{u}_{\varrho}\|_{L_{t}^{p}L_{x}^{q}\wred{(Q(\varrho))}} \leq \varepsilon,\quad
1\leq 2/p +3/q\leq 2,\;  1\leq p, q \leq \infty;\\
&\limsup_{\varrho\to 0 }\,\, \varrho^{2-\frac 2p - \frac 3 q}
\|\nabla u\|_{L_{t}^{p}L_{x}^{q}}  \leq \varepsilon,\quad
2\le 2/p +3/q \le 3, \; 1 \le p,q \wred{\le \infty.}
\label{gkt2}
 \end{align}

Besides suitable weak solutions, there exists other kind of weak solutions equipping  energy inequality to the Navier-Stokes equations \eqref{NS}. This kind of weak solutions are called Leray-Hopf weak solutions.
 A number of papers have been devoted to the study of regularity of Leray-Hopf weak solutions    and many sufficient regularity conditions are established (see for example, \cite{[CMZ1],[CMZ2],[CMZ3],[HLLZ],[CZ],[CZZ],[KZ],[ZP],[NP],[KZ],[Wolf],[Qian],[Zheng],[GCS],[GKS],[CT]}). 
In particular, utilizing  the anisotropic Lebesgue spaces, Zheng first  studied anisotropic regularity criterion in terms of one velocity component in \cite{[Zheng]}. Later, Qian \cite{[Qian]}; Guo,   Caggio and   Skalak \cite{[GCS]};   Guo,   Kucera and   Skalak \cite{[GKS]}, further considered regularity condition in anisotropic Lebesgue spaces \wgr{for the Leary-Hopf weak solutions of} system  \eqref{NS}. It is worth pointing out that Sobolev-embedding theorem in anisotropic Lebesgue space was established in these works. For the details, see Lemma \ref{zc} in Section \ref{sec2}.
Inspired by recent works \cite{[Zheng],[Qian],[GKS],[GCS]}, we investigate $\varepsilon$-regularity criteria \wgr{for the} 3D Navier-Stokes equations  in anisotropic Lebesgue spaces.
Now we formulate our result as follows
\begin{theorem}
\label{thcw1}Let $(u,\,\Pi)$ be a suitable weak solutions to \eqref{NS} in $Q(\varrho)$. There exists a positive constant $\varepsilon_{1} $ \wred{such that if} \be\label{spe1}\limsup\limits_{\varrho\rightarrow0}
 \varrho^{1-\f{2}{p}-\sum\limits^{3}_{j=1}\f{1}{q_{j}}}
\|u-\overline{u}_{\varrho}\|_{L_{t}^{p}L^{\overrightarrow{q}}_{x}(Q(\varrho))} \wred{\leq\varepsilon_1},
~~\f{2}{p}+\sum\limits^{3}_{j=1}\f{1}{q_{j}} \leq2,~~~~~\text{with}~q_{j} > 1;
\ee
\wred{then} $(0,0)$ is a regular point.
\end{theorem}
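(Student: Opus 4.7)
The plan is to reduce Theorem \ref{thcw1} to the single-scale Caffarelli-Kohn-Nirenberg criterion \eqref{CKN} by propagating the anisotropic smallness hypothesis \eqref{spe1} into smallness of the classical scale-invariant quantities at some sufficiently small radius. Following the standard CKN framework, I would introduce
\begin{equation*}
A(\varrho)=\frac{1}{\varrho}\sup_{-\varrho^2<t<0}\int_{B(\varrho)}|u|^2\,dx,\qquad E(\varrho)=\frac{1}{\varrho}\iint_{Q(\varrho)}|\nabla u|^2,
\end{equation*}
\begin{equation*}
C(\varrho)=\frac{1}{\varrho^2}\iint_{Q(\varrho)}|u|^3,\qquad D(\varrho)=\frac{1}{\varrho^2}\iint_{Q(\varrho)}|\Pi-\overline{\Pi}_{\varrho}|^{3/2},
\end{equation*}
and aim to show $\Phi:=A+E+C+D$ becomes small at some scale $r_\ast$.

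The crucial new step is an anisotropic interpolation bounding $C(r)$ in terms of $A$, $E$, and the anisotropic norm in \eqref{spe1}. Writing $u=(u-\overline{u}_\varrho)+\overline{u}_\varrho$ on $B(\varrho)$, I would apply H\"older in the time variable together with the anisotropic Sobolev embedding (Lemma \ref{zc}) to the mean-free part, interpolating between $L^\infty_tL^2_x$ and $L^2_tH^1_x$ which come from the local energy inequality. This should yield a bound of schematic form
\begin{equation*}
C(r)\leq C\Bigl(\frac{\varrho}{r}\Bigr)^{\kappa}\varepsilon_{1}^{\alpha}\bigl(A(\varrho)+E(\varrho)\bigr)^{\beta}+C\Bigl(\frac{r}{\varrho}\Bigr)^{3}A(\varrho)^{3/2},
\end{equation*}
with positive exponents $\alpha,\beta,\kappa$ prescribed by $p$ and the $q_j$. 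The hypotheses $q_j>1$ and $\frac{2}{p}+\sum\frac{1}{q_j}\leq 2$ are precisely the admissibility range that makes this interpolation scale-compatible.

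The pressure term $D(r)$ is handled by the standard harmonic / non-harmonic decomposition of $\Pi$ and Calder\'on-Zygmund theory, yielding a bound of the type $D(r)\leq C(r/\varrho)^{3/2}D(\varrho)+C(\varrho/r)^{\gamma}C(\varrho)^{\delta}$. Feeding the $C$--estimate into the generalized local energy inequality $A(r)+E(r)\lesssim (\varrho/r)^{\cdot}[C(\varrho)^{2/3}+D(\varrho)^{2/3}]$ produces a closed iteration of the form
\begin{equation*}
\Phi(\theta r)\leq \tfrac{1}{2}\Phi(r)+C\varepsilon_{1}^{\alpha}
\end{equation*}
for a fixed $\theta\in(0,1)$, provided $\varepsilon_1$ is chosen sufficiently small. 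Standard iteration then forces $\Phi(r_\ast)\ll 1$ at some small scale $r_\ast$, and \eqref{CKN} applied to the appropriately rescaled solution delivers the regularity of $(0,0)$.

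The principal obstacle is the critical borderline case $\frac{2}{p}+\sum\frac{1}{q_j}=2$: there the anisotropic interpolation estimate is fully scale-invariant, so no gain can be harvested from shrinking radii, and the smallness of $\varepsilon_1$ itself must absorb the offending term in the iteration. Careful bookkeeping of the three anisotropic exponents $q_1,q_2,q_3$ in Lemma \ref{zc}, combined with H\"older in $t$, is needed so that a single unified interpolation covers both the subcritical regime (where decay is available) and the critical regime (where only absorption works).
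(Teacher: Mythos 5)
Your proposal is correct and follows essentially the same route as the paper: the paper likewise reduces to the critical case $\f2p+\sum_{j}\f{1}{q_j}=2$ by H\"older, proves the cubic-term bound (Lemma \ref{ineq}) by applying the local anisotropic Sobolev embedding to $u-\overline{u}_{\varrho}$ and interpolating against $L^\infty_tL^2_x$ and $\|\nabla u\|_{L^2}$, treats the pressure via the harmonic/Calder\'on--Zygmund splitting (Lemma \ref{presure}), and closes the same smallness--absorption iteration before invoking the one-scale criterion \eqref{CKN}. The only cosmetic difference is that you iterate $A+E+C+D$ jointly, whereas the paper iterates only $F=E_3+D_{3/2}$ and recovers $A+E$ from the local energy inequality at each scale.
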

\begin{remark}
\wred{The valid range of $q$} in \eqref{tsai1} is greater than or equal to $\f32$. Hence, we extend \eqref{ckn2}-\eqref{tsai1} to anisotropic Lebesgue spaces as well as the range of index of spatial integral.
\end{remark}
\wred{Combining} Theorem
\ref{thcw1} and
the absolute continuity of Lebesgues integral immediately imply the following result, which is of independent interest.
\begin{coro}\label{coro}
 Suppose that $(u,\,\Pi)$ is a suitable weak solution to \eqref{NS}. Then $(0,0)$ is a regular point provided one  of the following conditions holds
 \begin{align}
 (1).~~ &u\in L^{p}_{t}L^{\overrightarrow{q}}_{x}(Q(\varrho)), ~~~~ \text{with} ~~~ \f{2}{p}+\sum^{3}_{j=1}\f{1}{q_{j}}=1, ~~ 1< q_{j} ; \label{serin1}\\
 (2).~~ & u\in L^{\infty}_{t}L^{\overrightarrow{q}}_{x}(Q(\varrho)),~~
   \text{and} ~~~~ \|u\|_{\wred{L_t^{\infty}L_x^{\overrightarrow{q}}}(Q(\varrho))}\leq\varepsilon~~~~ \text{with} ~~~  \sum^{3}_{j=1}\f{1}{q_{j}}=1, ~~ \wred{1< q_{j}.} \label{serin2}
     \end{align}
\end{coro}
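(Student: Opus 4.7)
The plan is to derive Corollary \ref{coro} as a direct application of Theorem \ref{thcw1} specialized to the critical case $\frac{2}{p}+\sum_{j=1}^{3}\frac{1}{q_{j}}=1$. Under this equality the scaling exponent appearing in \eqref{spe1} satisfies
\begin{equation*}
1-\frac{2}{p}-\sum_{j=1}^{3}\frac{1}{q_{j}}=0,
\end{equation*}
so the left-hand side of \eqref{spe1} reduces to the unweighted norm $\|u-\overline{u}_{\varrho'}\|_{L^{p}_{t}L^{\overrightarrow{q}}_{x}(Q(\varrho'))}$. The strategy is to make this quantity $\leq\varepsilon_{1}$ on all sufficiently small cylinders and then invoke Theorem \ref{thcw1}.

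For part (1), since $u\in L^{p}_{t}L^{\overrightarrow{q}}_{x}(Q(\varrho))$ with $1\leq p<\infty$ and $1<q_{j}<\infty$, the absolute continuity of the Lebesgue integral, applied to the iterated integrals defining the anisotropic norm, yields
\begin{equation*}
\lim_{\varrho'\to 0}\|u\|_{L^{p}_{t}L^{\overrightarrow{q}}_{x}(Q(\varrho'))}=0.
\end{equation*}
Combined with the triangle inequality and the uniform control of the spatial mean discussed below, this forces $\|u-\overline{u}_{\varrho'}\|_{L^{p}_{t}L^{\overrightarrow{q}}_{x}(Q(\varrho'))}\to 0$, so the limsup in \eqref{spe1} equals zero and Theorem \ref{thcw1} produces regularity at $(0,0)$.

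For part (2), the scaling exponent again vanishes since $p=\infty$ and $\sum\frac{1}{q_{j}}=1$. For every $\varrho'\leq\varrho$, monotonicity of the anisotropic norm in the spatial domain, monotonicity of the $L^{\infty}_{t}$ norm in the time interval, and the same mean bound give
\begin{equation*}
\|u-\overline{u}_{\varrho'}\|_{L^{\infty}_{t}L^{\overrightarrow{q}}_{x}(Q(\varrho'))}\leq C\|u\|_{L^{\infty}_{t}L^{\overrightarrow{q}}_{x}(Q(\varrho))}\leq C\varepsilon,
\end{equation*}
so shrinking the smallness threshold to $\varepsilon\leq\varepsilon_{1}/C$ at the outset reduces the statement to Theorem \ref{thcw1}.

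Because this is essentially a direct corollary, no serious obstacle is expected. The one technical input required is the uniform-in-$\varrho'$ bound $\|\overline{u}_{\varrho'}\|_{L^{\overrightarrow{q}}_{x}(B(\varrho'))}\leq C\|u(\cdot,t)\|_{L^{\overrightarrow{q}}_{x}(B(\varrho'))}$. This follows from iterated Hölder: first estimate $|\overline{u}_{\varrho'}(t)|\leq C(\varrho')^{-\sum 1/q_{j}}\|u(\cdot,t)\|_{L^{\overrightarrow{q}}_{x}(B(\varrho'))}$ using the dual exponents $q_{j}'=q_{j}/(q_{j}-1)$ (finite thanks to the assumption $q_{j}>1$), then note that the constant $\overline{u}_{\varrho'}(t)$ has $L^{\overrightarrow{q}}_{x}$-norm of order $(\varrho')^{\sum 1/q_{j}}$ on $B(\varrho')$; the two powers of $\varrho'$ cancel, giving the desired scaling-invariant inequality.
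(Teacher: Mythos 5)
Your proof is correct and follows the same route the paper intends: the paper derives the corollary in one line by combining Theorem \ref{thcw1} with the absolute continuity of the Lebesgue integral (for part (1)) and the assumed smallness (for part (2)), exactly as you do. The only content you add beyond the paper is the explicit H\"older bound $\|\overline{u}_{\varrho'}\|_{L^{\overrightarrow{q}}_{x}(B(\varrho'))}\leq C\|u(\cdot,t)\|_{L^{\overrightarrow{q}}_{x}(B(\varrho'))}$ controlling the spatial mean, which the paper leaves implicit and which you verify correctly.
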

\begin{remark}
As in \cite{[GKT]},   one can examine that weak solutions satisfying \eqref{serin1} or \eqref{serin2} with $q_{j}>2$ \wred{are suitable weak solutions}. Therefore, this generalizes  Serrin's
classical work \cite{[Serrin]}.
\end{remark}
\begin{remark}
After we finished the main part of this   paper, we learnt that
an interesting  work involving well-posedness \wred{of \eqref{NS} with initial data} in $L^{\overrightarrow{q}}(\mathbb{R}^{3}) $  was established by Phan \cite{[Phan]}, where $\overrightarrow{q}$ meets $\sum\limits_{j=1}^3\f{1}{q_{j}}=1 $ with $q_{j}>2$.
\end{remark}

Next we \wgr{turn our attention} to the $\varepsilon$-regularity criteria at one scale in the type of \eqref{CKN}. In particular,
 Choi   and Vasseur \cite{[CV]},
		Guevara and Phuc \cite{[GP]} improved \eqref{CKN}	  to
		\begin{equation}\label{CVGP}
		\|u\|_{L_{t}^{ \infty}L_{x}^{2 }(Q(1))} + \|\nabla u\|_{L^{2}(Q(1))}  +\|\Pi\|_{L^{1}(Q(1))}   \leq \varepsilon.
		\end{equation}
Recently,
Guevara and Phuc \cite{[GP]} found that \eqref{CKN} can be replaced by the follows
		\be\label{GP}
		\|u\|_{L_{t}^{2p}L_{x}^{ 2q} (Q(1))}+\|\Pi\|_{L_{t}^{p}L_{x}^{ q}(Q(1))}\leq \varepsilon,
		~~~\f2p+\f3q=\f72
		~~~\text{with}~1\leq p\leq2.\ee
 Authors in \cite{[HWZ]} further   extended  \eqref{GP}  to
\be\label{opt}
\|u\|_{L_{t}^{p}L_{x}^{q}(Q(1))}+\|\Pi\|_{L^{1}(Q(1))}\leq\varepsilon,~~1\leq \f2p+\f3q <2, 1\leq p,\,q\leq\infty.
\ee
Very recently, an alternative proof of \eqref{opt} was presented  by Dong and Wang \cite{[DW]}. Moreover, for  a short summary  on    $\varepsilon$-regularity criteria at one scale we refer the reader to \cite{[HWZ]} and references therein.
The second result  in this paper  concerns $\varepsilon$-regularity criteria in anisotropic Lebesgue spaces at one scale, which generalizes the corresponding results in \eqref{opt}.
\begin{theorem}\label{anatones}
		Let  the pair $(u,  \Pi)$ be a suitable weak solution to the 3D Navier-Stokes system \eqref{NS} in $Q(1)$.
		There exists an absolute positive constant $\varepsilon$
		such that if the pair $(u,\Pi)$ satisfies	\begin{align}\label{anatonen}\|u &\|_{L_{t}^{p}L^{\overrightarrow{q}}_{x}(Q(1))} +\|\Pi\|_{L^{1}(Q(1))}\leq\varepsilon,\end{align}
\wgr{where}
\begin{align}&\f2p+\sum^{3}_{j=1}\f{1}{q_{j}} <2 ~~~\text{with}~~  1<q_{j}<\infty,\label{partragnge}\end{align}
		\wgr{then} $u\in L^{\infty}(Q(1/2)).$
	\end{theorem}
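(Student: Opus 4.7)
The plan is to bootstrap the anisotropic smallness hypothesis \eqref{anatonen} into the isotropic smallness used in the one‑scale criterion \eqref{opt} of \cite{[HWZ]}; the strict inequality in \eqref{partragnge} will supply the room needed to land strictly inside the Serrin–subcritical range $[1,2)$.

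First I would exploit the local energy inequality on a slightly smaller cylinder (say, $Q(3/4)$) together with a standard cutoff. The convection term $\int |u|^2\,(u\cdot\nabla\phi)$ is controlled via H\"older in the anisotropic spaces, with the integrability loss absorbed by the smallness of $\|u\|_{L^p_t L^{\vec{q}}_x(Q(1))}$, while the pressure term $\int \Pi\,(u\cdot\nabla\phi)$ is handled through $\|\Pi\|_{L^1(Q(1))}$ combined with a local bound on $u$. The outcome is an a priori estimate of the form
\begin{equation*}
\|u\|_{L^\infty_t L^2_x(Q(3/4))}^{2}+\|\nabla u\|_{L^2(Q(3/4))}^{2}\le C\,\varepsilon^{\alpha}
\end{equation*}
for some $\alpha>0$, placing $u$ in the energy class on $Q(3/4)$ with quantitative smallness.

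Next I would apply the anisotropic Sobolev embedding of Lemma \ref{zc} together with a H\"older interpolation between the hypothesized control $u\in L^p_t L^{\vec{q}}_x$ and the energy class $L^\infty_t L^2_x\cap L^2_t H^1_x$ obtained in the previous step. The target is an \emph{isotropic} bound
\begin{equation*}
\|u\|_{L^{\tilde p}_t L^{\tilde q}_x(Q(1/2))}\le C\,\varepsilon^{\beta},
\end{equation*}
for some $(\tilde p,\tilde q)$ with $1\le \tfrac{2}{\tilde p}+\tfrac{3}{\tilde q}<2$ and $1<\tilde p,\tilde q<\infty$. The strict inequality \eqref{partragnge} is essential here: it is precisely what guarantees that the Serrin parameter of the interpolated exponent pair lies strictly below $2$, so that the hypothesis of \eqref{opt} is met. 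Combined with $\|\Pi\|_{L^1(Q(1/2))}\le\varepsilon$, the criterion \eqref{opt} on $Q(1/2)$ immediately delivers $u\in L^\infty(Q(1/4))$, and a trivial rescaling produces the stated conclusion on $Q(1/2)$.

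The main obstacle will be the anisotropic interpolation step: for every admissible vector $(p,\vec q)$ with $\tfrac{2}{p}+\sum \tfrac{1}{q_j}<2$ and $q_j\in(1,\infty)$, one must exhibit concrete interpolation parameters so that the resulting isotropic pair $(\tilde p,\tilde q)$ both lies in the Serrin‑subcritical strip $[1,2)$ and avoids the degenerate endpoints $1$ and $\infty$. Edge cases where some $q_j$ is close to $1$ or $\infty$ require delicate use of Lemma \ref{zc} to convert a derivative in $L^2$ into additional integrability in the weakest direction, while tracking the scaling so that the smallness is preserved with a strictly positive power of $\varepsilon$ at the end.
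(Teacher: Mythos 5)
Your overall strategy coincides with the paper's: both proofs first convert the hypothesis \eqref{anatonen} into quantitative smallness of $\|u\|_{L^\infty_tL^2_x}+\|\nabla u\|_{L^2}$ on an interior cylinder via the local energy inequality (this is exactly the Caccioppoli-type estimate \eqref{key ineq}, proved with the anisotropic Sobolev bound of Lemma \ref{zc2} and Giaquinta's iteration lemma), and then conclude by invoking a known one-scale criterion. Your detour through an interpolated isotropic Serrin pair and \eqref{opt} is harmless but unnecessary: once the energy norms are small, the energy class embeds into $L^{10/3}_{t,x}$ (or one applies \eqref{CVGP} directly), so the final step is immediate. You also slightly misplace where the strict inequality \eqref{partragnge} is used: it is needed already in the energy step, since with $\alpha=2/(\tfrac2p+\tfrac{1}{\overrightarrow{q}})$ the cubic term is bounded by $\|u\|^{\alpha}_{L^p_tL^{\overrightarrow{q}}_x}(\text{energy})^{3-\alpha}$, and absorbing $(\text{energy})^{3-\alpha}$ by Young requires $3-\alpha<2$, i.e.\ $\alpha>1$.

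The one genuine gap is your treatment of the pressure term $\iint \Pi\, u\cdot\nabla\phi$. As stated --- ``handled through $\|\Pi\|_{L^1(Q(1))}$ combined with a local bound on $u$'' --- this does not close: pairing $\Pi\in L^1(Q(1))$ against $u$ by H\"older would require $u\in L^\infty$, which is the conclusion, and no norm of $u$ available at this stage (energy class or $L^p_tL^{\overrightarrow{q}}_x$ with $q_j$ possibly near $1$) is dual to $L^1$ in space. The paper's proof hinges on the local pressure decomposition of Lemma \ref{lem2}: $\Pi=\Pi_1+\Pi_2+\Pi_3$ on the inner ball, where $\Pi_1,\Pi_2$ are Calder\'on--Zygmund/lower-order pieces controlled in $L^{3/2}$ by $\|u\|^2_{L^3}$ (hence absorbed like the convection term), while $\Pi_3$ is harmonic on the inner ball, so interior estimates upgrade $\|\Pi\|_{L^1}$ to an $L^1_tL^2_x$ bound there, which \emph{can} be paired with $\|u\|_{L^\infty_tL^2_x}$ and then handled by Young. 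Without this decomposition (or an equivalent device) your energy estimate in the first step cannot be established from the hypothesis $\|\Pi\|_{L^1(Q(1))}\leq\varepsilon$ alone.
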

\begin{remark}
Theorem \ref{anatones}
extends the recent results obtained by Guevara and  Phuc in \cite{[GP]} and \cite{[HWZ]}.
\end{remark}

 Very recently, by means of $\varepsilon$-regularity criteria at one scale just in terms of  velocity filed,
 Chae and Wolf \cite{[CW]} considered   Leray's backward self-similar solutions to the Navier-Stokes equations with the \wred{profile in $L^{p}(\mathbb{R}^{3})$} ($p>\f32$). The
backward self-similar singular solutions to the Navier-Stokes was \wred{introduced by Leray}, who was the first to observe that one can construct singular solutions of the Navier-Stokes equations via
backward self-similar solutions.
The pair $(u,\Pi)$ \wred{is said} to be backward self-similar solutions if
$(u,\Pi)$ satisfy, for $a>0, T\in \mathbb{R},$
\be\ba
&u(x,t)=\wred{\f{1}{\sqrt{2a(T-t)}}U\bigg(\f{x}{\sqrt{2a(T-t)}}\bigg)}, \label{learysb}\\
&\Pi(x,t)=\wred{\f{1}{ 2a(T-t)}P\bigg(\f{x}{\sqrt{2a(T-t)}}\bigg)},
\ea\ee
where $U=(U_{1},U_{2},U_{3})$ and $P$ are defined in $\mathbb{R}^{3}$
and the pair $(u(x,t),\Pi(x,t))$ is defined in $\mathbb{R}^{3}\times(-\infty,T)$.
We obtain a singular solution at $t=T$ if \wred{$U\neq0$ and}

\be \label{SNS}
	-\Delta  U+a U+a(y\cdot \nabla)U+U\cdot\nabla U+ \nabla P=0  , ~~\Div U=0, ~~y\in \mathbb{R}^{3}.
	 \ee

In \wred{this direction}, \wred{the breakthrough} was made by Ne\v{c}as,  R\r{a}u\v{z}i\v{c}ka and   \v{S}ver\'{a}k in \cite{[NRS]}, where they prove that Leary's backward self-similar solutions is trivial under $U\in L^{3}(\mathbb{R}^{3})$.
Subsequently, Tsai \cite{[Tsai]} show that
the solution $U\in L^{p}( \mathbb{R}^{3})$ with $3<p<\infty$ in system \eqref{SNS} is zero
and
the solution $U\in L^{\infty}(\mathbb{R}^{3})$ in system \eqref{SNS} is constant.
Very recently, Guevara   and   Phuc \cite{[GP2]},Chae and Wolf \cite{[CW]} show that there does not exist a nontrivial solutions of \eqref{SNS} under more general assumptions. To the knowledge of the authors, we summarize the known results concerning Leary's backward self-similar solutions  with profiles in
isotropic Lebesgue spaces $L^{p}(\mathbb{R}^{3})$.
\begin{center}\begin{tabular}{|p{2cm}|p{2.5cm}|p{3.5cm}|p{2cm}|p{2cm}|}
\hline
\centering $\f{3}{2}<p$ & \centering $\f{12}{5}<p<6$& \centering$ p=3$ & \centering $3<p<\infty$ &   $p=\infty$ \\ \hline
\centering$U=0$&\centering$U=0$&\centering$U=0$&\centering$U=0$ &$U=cons$\\ \hline
Chae and  Wolf \cite{[CW]}& Guevara and  Phuc \cite{[GP2]}& Ne\v{c}as,  R\r{a}u\v{z}i\v{c}ka and   \v{S}ver\'{a}k \cite{[NRS]} & Tsai  \cite{[Tsai]}  & Tsai \cite{[Tsai]}
\\ \hline
\end{tabular}\end{center}

We shall investigate Leray's backward self-similar solutions to the Navier-Stokes equations with the profile in anisotropic Lebesgue spaces. More precisely, we have the following statement.
 \begin{theorem}\label{[Leraybackself]}
 Let the pair $(U,P) \in C^{\infty}(\mathbb{R}^{3})^{3}\times C^{\infty}(\mathbb{R}^{3})$ be a solutions to \eqref{SNS}. Assume that, for $ 1<p_{j}<\infty $,
   \be\label{[Leraybackselfr]}
    U\in L^{\overrightarrow{p}}(\mathbb{R}^{3})~~
    \text{with}~~ \f{1}{p_{1}}+\f{1}{p_{2}}+\f{1}{p_{3}}<2,\ee
  then $U=0$.
 \end{theorem}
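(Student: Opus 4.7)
I would follow the strategy of Chae and Wolf \cite{[CW]}: realize $(U,P)$ as a self-similar suitable weak solution on a backward cylinder, apply the anisotropic $\varepsilon$-regularity criterion Theorem~\ref{thcw1} to upgrade every potentially singular point off the spatial origin to a regular point, exploit the self-similar scaling to transfer this regularity into pointwise decay of $U$, and finally invoke Tsai's rigidity result. Set $\gamma:=\tfrac{1}{p_{1}}+\tfrac{1}{p_{2}}+\tfrac{1}{p_{3}}\in(0,2)$ and pick $p$ with
$$\frac{2}{2-\gamma}<p, \qquad\text{and, if } \gamma<1,\ \text{also}\ p<\frac{2}{1-\gamma},$$
ensuring simultaneously $\frac{2}{p}+\gamma<2$ (the index range of Theorem~\ref{thcw1}) and $\gamma-1+\frac{2}{p}>0$ (so that the associated time integral is finite and yields a positive power of $\varrho$); this window is non-empty precisely because $\gamma<2$. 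Fix $a>0$, $T>0$ and build $(u,\Pi)$ from $(U,P)$ via the ansatz \eqref{learysb}; since $(U,P)\in C^{\infty}(\mathbb{R}^{3})$, $(u,\Pi)$ is a smooth (hence suitable weak) solution of \eqref{NS} on $\mathbb{R}^{3}\times(-\infty,T)$, with $(0,T)$ as the only possible singular point.

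The heart of the proof is to verify, for every $x_{0}\in\mathbb{R}^{3}\setminus\{0\}$, that
$$\limsup_{\varrho\to 0}\,\varrho^{\,1-\frac{2}{p}-\gamma}\,\|u-\overline{u}_{\varrho}\|_{L^{p}_{t}L^{\overrightarrow{p}}_{x}(Q((x_{0},T),\varrho))}=0.$$
Setting $\alpha(t):=\sqrt{2a(T-t)}$ and changing variables $y=x/\alpha(t)$, the anisotropic homogeneity gives
$$\|u(\cdot,t)\|_{L^{\overrightarrow{p}}(B(x_{0},\varrho))}=\alpha(t)^{\gamma-1}\,\|U\|_{L^{\overrightarrow{p}}(B(x_{0}/\alpha(t),\,\varrho/\alpha(t)))}.$$
For any $\varepsilon>0$, absolute continuity of the Lebesgue integral yields $R>0$ with $\|U\|_{L^{\overrightarrow{p}}(\{|y|>R\})}<\varepsilon$; since $x_{0}\neq 0$, one can take $\varrho$ so small that every ball on the right-hand side (as $t$ ranges over $(T-\varrho^{2},T)$) is contained in $\{|y|>R\}$. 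Integrating in $t$ and using $\gamma-1+\tfrac{2}{p}>0$ produces $\|u\|_{L^{p}_{t}L^{\overrightarrow{p}}_{x}(Q((x_{0},T),\varrho))}\le C\varepsilon\,\varrho^{\gamma-1+2/p}$, whence the scaled quantity is $\le C\varepsilon$. Since $\varepsilon$ is arbitrary the limsup is zero; subtracting the average $\overline{u}_{\varrho}$ changes only the constant.

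By Theorem~\ref{thcw1}, every $(x_{0},T)$ with $x_{0}\neq 0$ is a regular point. A finite covering of $\{1/2\le|x|\le 2\}$ by such regular neighbourhoods, together with self-similar scaling invariance, yields $\delta,M>0$ with $|u|\le M$ on $\{1/2\le|x|\le 2\}\times(T-\delta^{2},T)$. Evaluating the self-similar formula at an arbitrary unit vector $x$ and $\tau=T-t\downarrow 0$,
$$|U(x/\sqrt{2a\tau})|=\sqrt{2a\tau}\,|u(x,T-\tau)|\le M\sqrt{2a\tau},$$
so $|U(y)|\le M/|y|$ for all sufficiently large $|y|$ and every direction. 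Combined with smoothness of $U$ on compact sets, $U\in L^{q}(\mathbb{R}^{3})$ for every $q>3$, and the classical rigidity result of Tsai \cite{[Tsai]} then forces $U\equiv 0$.

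The principal technical obstacle is identifying a single $p$ compatible with both the index constraint of Theorem~\ref{thcw1} and the convergence of the time integral arising from the self-similar rescaling; the hypothesis $\gamma<2$ is exactly what makes the admissible window non-empty. A secondary subtlety is the uniformization of the local bound on $u$ across the unit sphere, handled by a finite covering together with the scaling invariance of the self-similar solution.
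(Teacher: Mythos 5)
Your overall architecture (regularity away from the spatial origin, decay of $U$ at infinity, then Tsai's rigidity) has the right shape, but the central step --- invoking Theorem \ref{thcw1} on cylinders $Q((x_0,T),\varrho)$ that touch the singular time --- has a genuine gap, and it is precisely the gap this paper is built to avoid. Theorem \ref{thcw1} is a statement about suitable weak solutions in the sense of Definition \ref{defi1}: its proof runs through the local energy inequality \eqref{loc} and the pressure quantity $D_{3/2}$, and it terminates in the CKN criterion \eqref{CKN}, all of which require $\Pi\in L^{3/2}$ locally on the cylinder in question. Under the sole hypothesis $U\in L^{\overrightarrow{p}}(\mathbb{R}^{3})$ with $\sum_j 1/p_j<2$ (which allows every $p_j$ well below $3$, e.g. $p_j=8/5$), the profile pressure, determined by $-\Delta P=\partial_i\partial_j(U_iU_j)+\cdots$, is only controlled in $L^{\overrightarrow{p}/2}$, which need not give $\Pi\in L^{3/2}$ on the exterior regions swept out by the rescaled balls as $t\uparrow T$; likewise $u\in L^{\infty}_tL^2_x$ up to $t=T$ on $B(x_0,\varrho)$ does not follow from $U\in L^{\overrightarrow{p}}\cap C^{\infty}$ alone. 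So the assertion ``smooth on $\mathbb{R}^{3}\times(-\infty,T)$, hence a suitable weak solution on a neighbourhood of $(x_0,T)$'' is unjustified, and Theorem \ref{thcw1} cannot be applied where you apply it.

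The paper's route differs in exactly this respect: it first proves the one-scale, velocity-only criterion Theorem \ref{the1.5} (via Wolf's local pressure projection, the pressure-free local energy inequality \eqref{wloc1}, and the Caccioppoli-type estimate of Lemma \ref{zc22}), and then applies it on cylinders of the form $B(y_0,1)\times(t_*-1,t_*]$ with $2a(T-t_*)=1$, which lie strictly below the singular time, so the solution is smooth there and the only hypothesis to verify is $\sup_t\|u\|_{L^{\overrightarrow{p}}(B(y_0,1))}<\varepsilon$; this follows from the decay at infinity of $\|U\|_{L^{\overrightarrow{p}}}$ because $\alpha(t)=\sqrt{2a(T-t)}$ stays bounded above and below on that time interval. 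This yields $U\in L^{\infty}(\mathbb{R}^{3})$, and Tsai's theorem finishes. Your time-integral bookkeeping (the window $1-\gamma<2/p<2-\gamma$) and the final decay/covering/Tsai steps are sound, but to repair the argument you must either replace Theorem \ref{thcw1} by the pressure-free Theorem \ref{the1.5}, or independently show that the self-similar pair is a suitable weak solution (with $L^{3/2}$ pressure and finite local energy) up to $t=T$ near $x_0\neq 0$, which is not available in the stated generality.
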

 \begin{remark}
A special case of (\ref{[Leraybackselfr]}) is $U\in L_{1}^{3/2}L_{2}^{3/2}L_{3}^{p_{3}}(\mathbb{R}^{3})$ \wred{with $p_3>\f32$}, which is more general than that in \cite{[CW]}.
\end{remark}
\begin{remark}
We would like to mention that Phan recently considered
 Liouville type theorems for 3D stationary Navier-Stokes equations in weighted mixed-norm Lebesgue
spaces.
\end{remark}
The proof of Theorem  \ref{[Leraybackself]} is based on Chae and Wolf's approach \cite{[CW]}. We present the framework of the argument utilized in \cite{[CW]}.
\begin{enumerate}[-]
\item
First, Chae and Wolf derived the $\varepsilon$-regularity criteria at one scale without the pressure \wred{as below}
\be \label{wolfchae}
\sup_{-1\leq t\leq0}\|u\|_{L^{q}(B(1))} < \varepsilon,~~\f32<q.
\ee
\item
Second, it follows from \cite[Theorem 1,   p.31 ]{[Tsai]} that $U$ belonging
to $L^{\infty}(\mathbb{ R}^{3})\bigcap L^{p}(\mathbb{ R}^{3}) $ with $1\leq p<\infty$ is trivial. Therefore, the key point in this step is to show that   $U\in L^{\infty}(\mathbb{ R}^{3})$. It is well-known that $u\in C^{\infty}(\mathbb{R}^{3})$. As a consequence, \wred{it suffices} to prove that the bound of $U(x)$ for $|x|\geq R$ with
suffice large $R$. The decay at infinity of $L^{p}(\mathbb{R}^{3})$ together with the $\varepsilon$-regularity criteria \eqref{wolfchae} yields the desired estimate.
\end{enumerate}
Based on this, we see that it is enough to generalize
\eqref{wolfchae} to extend Leray's self-similar solutions. Therefore,  Theorem  \ref{[Leraybackself]} turns out to be a  consequence of the following result.
 \begin{theorem}\label{the1.5}
		Let  the pair $(u,  \Pi)$ be a suitable weak solution to the 3D Navier-Stokes system \eqref{NS} in $Q(1)$.
		There exists an absolute positive constant $\varepsilon$
		such that if   $u$ satisfies	
\be \label{wwwolf}
\sup_{-1\leq t\leq0}\|u\|_{L^{\overrightarrow{q}}(B(1))} < \varepsilon,~~\f{1}{q_{1}}+\f{1}{q_{2}}+\f{1}{q_{3}}<2, 1<q_{j}<\infty.
\ee
		then, $u\in L^{\infty}(Q(1/16)).$
	\end{theorem}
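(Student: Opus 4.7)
The plan is to reduce Theorem~\ref{the1.5} to Theorem~\ref{anatones} via a local pressure decomposition in the spirit of Wolf and Chae--Wolf. Since Theorem~\ref{anatones} requires smallness of both $u$ in $L_{t}^{p}L_{x}^{\overrightarrow{q}}$ and $\Pi$ in $L^{1}$, but \eqref{wwwolf} controls only the velocity, the main task is to extract $L^{1}$-smallness of $\Pi$ (up to a time-dependent constant, which leaves the momentum equation unchanged) on a suitable sub-cylinder $Q(r)$, and simultaneously produce a velocity bound in an admissible anisotropic range.

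The pressure step runs as follows. Choose $\phi\in C_{c}^{\infty}(B(1))$ with $\phi\equiv 1$ on $B(1/2)$, and decompose $\Pi=p_{1}+p_{h}$ on $B(1/2)$, where $p_{1}:=\mathcal{R}_{i}\mathcal{R}_{j}(\phi\,u_{i}u_{j})$ (a Riesz-transform composition) and $p_{h}:=\Pi-p_{1}$ is harmonic in $x$ on $B(1/2)$ for a.e.\ $t$. Mixed-norm Calder\'on--Zygmund theory applied pointwise in $t$, with an initial H\"older reduction in any direction where $q_{j}<2$, yields
\begin{equation*}
\sup_{t}\|p_{1}(\cdot,t)\|_{L^{\overrightarrow{r}}(\mathbb{R}^{3})}\lesssim \sup_{t}\|u(\cdot,t)\|_{L^{\overrightarrow{q}}(B(1))}^{2}<\varepsilon^{2}
\end{equation*}
for a suitable $\overrightarrow{r}$, while interior estimates for harmonic functions give
\begin{equation*}
\|p_{h}(\cdot,t)-(p_{h}(\cdot,t))_{B(r)}\|_{L^{\infty}(B(r))}\lesssim r\,\|p_{h}(\cdot,t)\|_{L^{1}(B(1/2))}, \qquad r\leq 1/4,
\end{equation*}
whose right-hand side is finite since $\Pi\in L^{1}_{\mathrm{loc}}$. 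Integration in $t$ and combination give a bound on $\|\Pi-(p_{h})_{B(r)}\|_{L^{1}(Q(r))}$ with the right scaling to satisfy the pressure assumption of Theorem~\ref{anatones} at scale $r$, once $\varepsilon$ and then $r$ are taken small.

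For the velocity, I would combine \eqref{wwwolf} with the local energy inequality, written for the shifted pressure $\Pi-(p_{h})_{B(r)}$ (admissible since $\nabla(p_{h})_{B(r)}=0$), to control $\sup_{-r^{2}<t<0}\|u\|_{L^{2}(B(r))}$ and $\|\nabla u\|_{L^{2}(Q(r))}$ by $\varepsilon$ and $r$. Interpolating this $L^{\infty}_{t}L^{2}_{x}\cap L^{2}_{t}\dot{H}^{1}_{x}$ control against the hypothesis $L^{\infty}_{t}L^{\overrightarrow{q}}_{x}$ via the anisotropic Sobolev embedding of Lemma~\ref{zc} upgrades the bound to
\begin{equation*}
\|u\|_{L^{p}_{t}L^{\overrightarrow{s}}_{x}(Q(r))}\leq C(\varepsilon)\,r^{\alpha}
\end{equation*}
for some $\alpha>0$ and exponents $(p,\overrightarrow{s})$ lying strictly inside the admissible range $\tfrac{2}{p}+\sum_{j}\tfrac{1}{s_{j}}<2$, $1<s_{j}<\infty$. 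For $r$ sufficiently small---in particular $r=1/8$ after balancing constants---both summands in \eqref{anatonen} at scale $r$ fall below the threshold of Theorem~\ref{anatones} after rescaling $Q(r)$ to $Q(1)$. Applying that theorem to $(u,\Pi-(p_{h})_{B(r)})$ then yields $u\in L^{\infty}(Q(r/2))\subset L^{\infty}(Q(1/16))$.

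The principal technical obstacle is matching the two pressure pieces with the $r^{-3}$ factor introduced when rescaling $\|\Pi\|_{L^{1}(Q(r))}$ to $Q(1)$: the harmonic piece contributes a safe $O(r)$, but the singular piece $p_{1}$ survives only by using the strict inequality $\sum 1/q_{j}<2$ in a quantitative manner, possibly combined with the interpolation above rather than a raw H\"older step. A parallel subtlety is to tune the anisotropic Sobolev embedding so that the resulting exponents $(p,\overrightarrow{s})$ sit strictly inside (not on the boundary of) the range of Theorem~\ref{anatones}; the margin afforded by \eqref{wwwolf} being slightly subcritical is precisely what provides this slack.
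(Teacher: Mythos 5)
Your route is genuinely different from the paper's: you keep the classical Calder\'on--Zygmund/harmonic splitting of the global pressure and try to feed the outcome into Theorem~\ref{anatones}, whereas the paper never touches the global pressure at all. It works with Wolf's local pressure projection (Definition~\ref{defi}), in which the local energy inequality \eqref{wloc1} involves only $\nabla\Pi_h$ (harmonic), $\Pi_1$ with $\|\Pi_1\|_{L^2}\leq\|\nabla u\|_{L^2}$, and $\Pi_2$ with $\|\Pi_2\|_{L^{3/2}}\leq\||u|^2\|_{L^{3/2}}$; these are absorbed into a Caccioppoli-type inequality (Lemma~\ref{zc22}) bounding $\|u\|^2_{L^3_tL^{18/5}_x(Q(R/2))}+\|\nabla u\|^2_{L^2(Q(R/2))}$ by powers of $\|u\|_{L^\infty_tL^{\overrightarrow{q}}_x(Q(R))}$ alone, and the conclusion then follows from the known pressure-free one-scale criteria \eqref{wolfchae} or \cite{[Wolf1],[JWZ]}, not from Theorem~\ref{anatones}.

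The decisive gap in your version is the estimate $\sup_t\|p_1(\cdot,t)\|_{L^{\overrightarrow{r}}}\lesssim\sup_t\|u(\cdot,t)\|^2_{L^{\overrightarrow{q}}(B(1))}$. The natural exponent is $\overrightarrow{r}=\overrightarrow{q}/2$, and mixed-norm Calder\'on--Zygmund theory requires every component exponent to lie strictly inside $(1,\infty)$; the hypothesis \eqref{wwwolf} permits $1<q_j\leq 2$ (for instance $\overrightarrow{q}=(3/2,6,6)$), in which case $q_j/2\leq 1$ and $\mathcal{R}_i\mathcal{R}_j$ is not bounded. Your proposed ``H\"older reduction'' cannot repair this: on a bounded domain H\"older only lowers exponents, while you would need to raise the offending $q_j/2$ above $1$, and no admissible $\overrightarrow{q}'$ with all $q_j'>2$ sits below a $\overrightarrow{q}$ having some $q_j<2$. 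The only available fix is to interpolate $u(\cdot,t)$ between $L^{\overrightarrow{q}}$ and the energy class $L^2$, but then the bound becomes $\varepsilon^{2\theta}\|u\|^{2(1-\theta)}_{L^\infty_tL^2_x}$, so the smallness of the pressure --- and hence the admissible $\varepsilon$ --- depends on the solution's energy norm, contradicting the claim that $\varepsilon$ is absolute; it also entangles the pressure bound with the very quantities $\|u\|_{L^\infty_tL^2_x}$, $\|\nabla u\|_{L^2}$ you are trying to control through the local energy inequality, forcing exactly the absorption/iteration argument that constitutes the paper's Lemma~\ref{zc22}. If you restricted to $q_j>2$ your scheme would be plausible, but in the stated generality this step fails as written.
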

The proof of \eqref{wolfchae} and \eqref{wwwolf} relies on the local suitable weak solutions via  local pressure projection. The   local pressure projection and local suitable weak solutions is due to Wolf \cite{[Wolf1]}. The    novelty in this concept is that the local energy
inequality
\eqref{wloc1} removed   the non-local effect of the pressure term.
It is shown that any suitable weak solutions is local suitable weak solutions in \cite{[CW17]}. We refer the reader to \cite{[JWZ]} and \cite{[WWZ]} for other $\varepsilon$-regularity criteria at one scale without pressure.

This  paper is organized as follows. In the  second section,
we present the notations and some known fact such as  interpolation inequality and Sobolev embedding theorem  in anisotropic Lebesgue spaces. In addition, we also recall
 the local suitable weak solutions.
 Section 3 is devoted to
proving Theorem \ref{thcw1}. In Section 4,   we prove Theorems \ref{anatones} involving $\varepsilon$-regularity criteria at one scale. In \wred{final section},   we complete the proof of Theorems \ref{the1.5} concerning  $\varepsilon$-regularity criteria at one scale just via velocity filed. This also means Theorem \ref{[Leraybackself]}.

 \section{Notations and \wred{some known facts}} \label{sec2}

A function $f$ belongs to  the anisotropic Lebesgue spaces $L^{\overrightarrow{q}}_{x}(\Omega)$
if
  $$\|f\|_{L^{\overrightarrow{q}}_{x}(\Omega)}=\|f\|_{L_{1}^{q_{1}}L_{2}^{q_{2}}L_{3}^{q_{3}}(\Omega)}=
  \B\|\big\|\|u\|_{L_{1}^{q_{1}}(\{x_1:x\in\Omega\})}\big\|_{L_{2}^{q_{2}}(\{{x_2:x\in\Omega}\})}\B\|_{L_{3}^{q_{3}} (\{x_3:x\in\Omega\})}<\infty.  $$
The study of
 anisotropic Lebesgue spaces  first appears in
  Benedek and   Panzone \cite{[BP]}.
 \wred{Various topics}  on mixed Lebesgue spaces
  were established (see e.g. \cite{[Ward],[RRT],[BP]} \wred{and references therein}).

For $p\in [1,\,\infty]$, the notation $L^{p}((0,\,T);X)$ stands for the set of measurable functions on the interval $(0,\,T)$ with values in $X$ and $\|f(t,\cdot)\|_{X}$ belongs to $L^{p}(0,\,T)$.

   For simplicity,   we write $$\|f\| _{L_{t}^{p}L_{x}^{\overrightarrow{q}}(Q(\varrho))}:=\|f\| _{L^{p}(-\varrho^{2},0;L^{\overrightarrow{q}}(B(\varrho)))}, $$
  where $Q(\varrho)=B(\varrho)\times ( -\varrho^{2}, 0)$ and $ B(\varrho)$ denotes the ball of center $0$ and radius $\varrho$.

  In what follows, for the sake of simplicity of presentation, we define $$\wred{\f{1}{\overrightarrow{q}}=\frac{1}{q_1}+\frac{1}{q_{2}}+\frac{1}{q_{3}}}. $$
We denote
  the average of $f$ on the ball $B(r)$ by
$\overline{f}_{r}$.

 Moreover, for the convenience of the reader, we state a fact which will be   frequently used below
\be\label{simplefact}
\wgr{\Omega_{1}\subseteq\Omega_{2}\subseteq\Omega_{3},}
\ee
where
$$\Omega_{1}=\{x:|x|<1\},\Omega_{2}=\{x:|x_1|,|x_2|,|x_3|<1\},
\Omega_{3}=\{x:|x|<\sqrt{3}\}.
$$
   The classical Sobolev space $W^{k,p}(\Omega)$ is equipped with the norm $\|f\|_{W^{k,p}(\Omega)}=\sum\limits_{\alpha =0}^{k}\|D^{\alpha}f\|_{L^{p}(\Omega)}$. We denote by  $ \dot{H}^{s}$ homogeneous Sobolev spaces with the norm $\|f\|^{2} _{\dot{H}^{s}}= \int_{\mathbb{R}^{3}} |\xi|^{2s}|\hat{f}(\xi)|^{2}d\xi$.
 We will also use  the summation convention on repeated indices.
 $C$ is an absolute
   constant which may be different from line to line unless otherwise stated.

Now,    for the convenience of readers, we recall the \wred{classical definition of   suitable weak solution} to the Navier-Stokes system \eqref{NS}.
	\begin{definition}\label{defi1}
		A  pair   $(u, \,\Pi)$  is called a suitable weak solution to the Navier-Stokes equations \eqref{NS} provided the following conditions are satisfied,
		\begin{enumerate}[(1)]
			\item $u \in L^{\infty}(-T,\,0;\,L^{2}(\mathbb{R}^{3}))\cap L^{2}(-T,\,0;\,\dot{H}^{1}(\mathbb{R}^{3})),\,\Pi\in
			L^{3/2}(-T,\,0;L^{3/2}(\mathbb{R}^{3}));$
			\item$(u, ~\Pi)$~solves (\ref{NS}) in $\mathbb{R}^{3}\times (-T,\,0) $ in the sense of distributions;
			\item$(u, ~\Pi)$ satisfies the following inequality, for a.e. $t\in[-T,0]$,
			\begin{align}
				&\int_{\mathbb{R}^{3}} |u(x,t)|^{2} \phi(x,t) dx
				+2\int^{t}_{-T}\int_{\mathbb{R} ^{3 }}
				|\nabla u|^{2}\phi  dxds\nonumber\\ \leq&  \int^{t}_{-T }\int_{\mathbb{R}^{3}} |u|^{2}
				(\partial_{s}\phi+\Delta \phi)dxds
				+ \int^{t}_{-T }
				\int_{\mathbb{R}^{3}}u\cdot\nabla\phi (|u|^{2} +2\Pi)dxds, \label{loc}
			\end{align}
			where non-negative function $\phi(x,s)\in C_{0}^{\infty}(\mathbb{R}^{3}\times (-T,0) )$.\label{SWS3}
		\end{enumerate}
	\end{definition}

\begin{lemma}{\cite{[BP]}}\label{interi}
 Suppose $\Omega\subset\mathbb{R}^{3}$ and $1\leq s_{j}\leq r\leq t_{j}\leq\infty$ and
$$
\f{1}{r}=\f{\theta}{s_{j}}+\f{1-\theta}{t_{j}}.
$$
Assume also $f\in L^{\overrightarrow{s}}(\Omega)\cap L^{\overrightarrow{t}}(\Omega)$. Then
 $ f\in L^{\overrightarrow{r}}(\Omega)$ and
\be\label{interinequality}
\|f\|_{L^{r}(\Omega) }
\leq \|f\|^{\theta}_{L^{\overrightarrow{s}}(\Omega) }\|f\|^{1-\theta}_{L^{\overrightarrow{t}}(\Omega) }.
\ee
\end{lemma}
\begin{proof}
By the H\"older inequality, we know that
$$\ba
\|f\|_{L^{r}(\Omega)}&=\||f|^{r\theta}|f|^{r(1-\theta)}\|^{\f{1}{r}}_{L^{1}(\Omega)}
\\&\leq \||f|^{r\theta}\|^{\f{1}{r}}_{L^{\f{\overrightarrow{s}}{r\theta}}(\Omega)} \||f|^{r(1-\theta)}\|^{\f{1}{r}}_{L^{\f{\overrightarrow{t}}{r(1-\theta)}}(\Omega)}
\\&\leq
\|f\|^{\theta}_{L^{\overrightarrow{s}}(\Omega) }\|f\|^{1-\theta}_{L^{\overrightarrow{t}}(\Omega) }.
\ea$$
\end{proof}

We recall the  Sobolev
embedding theorem in anisotropic Lebesgue space in the full three-dimensional space. We refer to \cite{[GCS]} for the proof of the following result.
\begin{lemma}{\cite{[Zheng],[GCS],[Qian],[GKS]}}\label{zc} Let $q_{1},q_{2},q_{3}\in[2,\infty)$ and \wred{$0\leq \f{1}{\overrightarrow{q}}-\f12\leq 1$}. Then there exists a constant $C$ such that
\begin{align}\label{anis}\|f\|_{\wred{L^{\overrightarrow{q}}(\mathbb{R}^3)}}
 &\leq C
\|\partial_{1}f\|^{\f{q_{1}-2}{2q_{1}}}_{L^{2}(\mathbb{R}^{3})}\|\partial_{2}f\|^{\f{q_{2}-2}{2q_{2}}}_{L^{2}(\mathbb{R}^{3})}
\|\partial_{3}f\|^{\f{q_{3}-2}{2q_{3}}}_{L^{2}(\mathbb{R}^{3})}\|f\|^{\f{1}{\overrightarrow{q}}-\f12}_{L^{2}(\mathbb{R}^{3})}
\nonumber\\&\leq C\|\nabla f\|_{L^{2}(\mathbb{R}^{3})}^{\f32-{\f{1}{\overrightarrow{q}}}}\|f\|^{\f{1}{\overrightarrow{q}}-\f12}_{L^{2}(\mathbb{R}^{3})}.
\end{align}
\end{lemma}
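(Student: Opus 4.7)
The statement to prove is an anisotropic Gagliardo--Nirenberg interpolation inequality. The natural route is to iterate the one-dimensional Sobolev inequality in each of the three coordinate directions, combining the intermediate partial norms by H\"older. Before starting, one checks the scaling: with $\alpha_j = (q_j-2)/(2q_j) = \tfrac12 - \tfrac{1}{q_j}$ one has $\sum_j \alpha_j = \tfrac32 - \tfrac{1}{\overrightarrow{q}}$, and together with the residual exponent $\tfrac{1}{\overrightarrow{q}} - \tfrac12$ on $\|f\|_{L^2}$ the right-hand side is homogeneous of degree $1$ under $f_\lambda(x) = f(\lambda x)$ with the correct $L^{\overrightarrow{q}}$ scaling, so the exponents are forced.

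The first step is to record the one-dimensional Gagliardo--Nirenberg inequality: for $g\in H^1(\mathbb{R})$ and $q\in[2,\infty)$,
\[
\|g\|_{L^q(\mathbb{R})} \leq C \|g\|_{L^2(\mathbb{R})}^{1/2+1/q} \|g'\|_{L^2(\mathbb{R})}^{1/2 - 1/q}.
\]
This follows from the elementary 1D embedding $\|g\|_{L^\infty}^2 \leq 2\|g\|_{L^2}\|g'\|_{L^2}$ combined with $L^2$--$L^\infty$ interpolation (legal since $q\ge 2$). The exponent $\tfrac12 - \tfrac1q$ on $\|g'\|_{L^2}$ is exactly $\alpha_j$ above, which is what allows the iteration to produce the factors claimed in the lemma.

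Next I iterate in the three coordinate directions. Applying the 1D inequality in $x_1$ pointwise in $(x_2,x_3)$ gives
\[
\|f(\cdot,x_2,x_3)\|_{L^{q_1}_{x_1}} \leq C \|f(\cdot,x_2,x_3)\|_{L^{2}_{x_1}}^{1/2+1/q_1} \|\partial_1 f(\cdot,x_2,x_3)\|_{L^{2}_{x_1}}^{1/2-1/q_1}.
\]
Raising to the $q_2$-th power, integrating in $x_2$, and applying H\"older with exponents chosen so that each of the two factors emerges as an $L^{2}_{x_2}$ quantity, one can then apply the 1D inequality again in $x_2$ to each of those pieces (pointwise in $x_3$). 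Repeating the same maneuver one more time in $x_3$ converts every remaining partial $L^2$ norm into a full $L^2(\mathbb{R}^3)$ norm. After a careful tally of exponents the $L^2$ pieces regroup as $\|f\|_{L^2(\mathbb{R}^3)}^{1/\overrightarrow{q}-1/2}$ and the derivative pieces as the product $\prod_j \|\partial_j f\|_{L^2(\mathbb{R}^3)}^{\alpha_j}$, which yields the first inequality. The second inequality is immediate from $\|\partial_j f\|_{L^2} \leq \|\nabla f\|_{L^2}$ and $\sum_j \alpha_j = \tfrac32 - \tfrac{1}{\overrightarrow{q}}$.

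The main obstacle is the bookkeeping of H\"older exponents in the two intermediate steps: at each stage the product of two partial norms must be split so that after the next iteration only $L^2$ norms in the new direction appear (otherwise the 1D Gagliardo--Nirenberg cannot be re-applied). The hypothesis $q_j \geq 2$ is used at every step to legitimize the interpolation between $L^2$ and $L^\infty$, while the condition $0 \leq \tfrac{1}{\overrightarrow{q}} - \tfrac12 \leq 1$ guarantees that the final residual exponent on $\|f\|_{L^2(\mathbb{R}^3)}$ is nonnegative and that the total exponent on the derivative factors does not exceed $1$, keeping the interpolation consistent.
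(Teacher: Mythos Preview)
The paper does not give its own proof of this lemma; it states the result and refers to \cite{[GCS]} (and \cite{[Zheng],[Qian],[GKS]}) for the argument. Your overall strategy --- iterate the one-dimensional Gagliardo--Nirenberg inequality coordinate by coordinate and glue the pieces with H\"older --- is exactly the route taken in those references, and your scaling check, your derivation of the one-dimensional inequality, and the passage to the second line of \eqref{anis} are all correct.

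There is, however, a concrete slip in the middle of your iteration. You write that after integrating in $x_2$ one applies H\"older ``so that each of the two factors emerges as an $L^{2}_{x_2}$ quantity''; but sending \emph{both} factors to $L^2_{x_2}$ forces $q_2=2$. In the standard iteration only the derivative factor $\|\partial_1 f\|_{L^2_{x_1}}$ is placed in $L^2_{x_2}$, while the factor $\|f\|_{L^2_{x_1}}$ lands in some $L^{r_2}_{x_2}$ determined by $\frac{a_1}{r_2}+\frac{b_1}{2}=\frac{1}{q_2}$ (with $a_1=\tfrac12+\tfrac{1}{q_1}$, $b_1=\tfrac12-\tfrac{1}{q_1}$), and the one-dimensional inequality is reapplied in $x_2$ to that factor alone, via $\bigl|\partial_2\|f\|_{L^2_{x_1}}\bigr|\le\|\partial_2 f\|_{L^2_{x_1}}$. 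Even this corrected step requires $r_2\ge 2$, i.e.\ $\frac{1}{q_2}\ge\frac12\bigl(\frac12-\frac{1}{q_1}\bigr)$, which can fail under the stated hypotheses (take $q_1=q_2=10$, $q_3=3$, for which $\sum 1/q_j=8/15>1/2$ but $1/q_2=1/10<1/5$). The cited proofs circumvent this by a more careful distribution of the H\"older exponents across all remaining variables at once rather than one coordinate at a time. You have rightly flagged the bookkeeping as the main obstacle, but as written your sketch does not yet resolve it.
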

\wred{We can state} the local version of the above lemma.
\begin{lemma}\label{zcl} \wred{Let $q_{1},q_{2},q_{3}\in[2,\infty)$ and $0\leq \f{1}{\overrightarrow{q}}-\f12\leq 1$.} Then, for $\varrho>0$ and $0<\xi<\eta$, there exists a constant $C$ such that
\begin{align}
&\|f\|_{L_{1}^{q_{1}} L_{2}^{q_{2}} L_{3}^{q_{3}}(B(\varrho) )} \leq C\|\nabla f\|_{L^{2}(B(\sqrt{2}\varrho ))}^{\f32-\f{1}{\overrightarrow{q}}}\|f\|^{\f{1}{\overrightarrow{q}}-\f{1}{2}}_{L^{2}(B(\sqrt{2}\varrho))} +C\varrho^{-({\f32-\f{1}{\overrightarrow{q}}})}\|f\| _{L^{2}(B(\sqrt{2}\varrho ))},
\label{locan}\\
&\|f\|_{L_{1}^{q_{1}} L_{2}^{q_{2}} L_{3}^{q_{3}}(B(\f{\xi+3\eta}{4}) )} \leq C\|\nabla f\|_{L^{2}(B( \eta))}^{ \f32-\f{1}{\overrightarrow{q}}} \|f\|^{\f{1}{\overrightarrow{q}}-\f{1}{2}}_{L^{2}(B( \eta))} + C(\eta-\xi)^{- ({\f32-\f{1}{\overrightarrow{q}}})}\|f\| _{L^{2}(B( \eta))}.\label{locanlast}
\end{align}
\end{lemma}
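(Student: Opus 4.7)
\textbf{Proof proposal for Lemma \ref{zcl}.} The plan is to reduce both estimates to the global anisotropic Sobolev embedding of Lemma \ref{zc} via a standard cutoff argument. Since Lemma \ref{zc} is stated on $\mathbb{R}^{3}$, I will extend $f$ by zero outside a slightly larger ball by multiplying by a smooth cutoff, then use the Leibniz rule to handle the extra derivative terms, and finally exploit the identity $\left(\tfrac{3}{2}-\tfrac{1}{\overrightarrow{q}}\right)+\left(\tfrac{1}{\overrightarrow{q}}-\tfrac{1}{2}\right)=1$ to collapse one of the terms to a clean $L^{2}$ bound.

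For the first inequality, fix a cutoff $\varphi\in C_{0}^{\infty}(B(\sqrt{2}\varrho))$ with $\varphi\equiv 1$ on $B(\varrho)$, $0\leq\varphi\leq 1$, and $\|\nabla\varphi\|_{L^{\infty}}\leq C\varrho^{-1}$. Since the $L^{\overrightarrow{q}}$ norm on $B(\varrho)$ is controlled by that of $f\varphi$ on $\mathbb{R}^{3}$, applying Lemma \ref{zc} yields
\begin{equation*}
\|f\|_{L^{\overrightarrow{q}}(B(\varrho))}\le\|f\varphi\|_{L^{\overrightarrow{q}}(\mathbb{R}^{3})}\le C\,\|\nabla(f\varphi)\|_{L^{2}(\mathbb{R}^{3})}^{\frac{3}{2}-\frac{1}{\overrightarrow{q}}}\|f\varphi\|_{L^{2}(\mathbb{R}^{3})}^{\frac{1}{\overrightarrow{q}}-\frac{1}{2}}.
\end{equation*}
The Leibniz rule together with the support properties of $\varphi$ gives
\begin{equation*}
\|\nabla(f\varphi)\|_{L^{2}(\mathbb{R}^{3})}\le\|\nabla f\|_{L^{2}(B(\sqrt{2}\varrho))}+C\varrho^{-1}\|f\|_{L^{2}(B(\sqrt{2}\varrho))},\qquad\|f\varphi\|_{L^{2}(\mathbb{R}^{3})}\le\|f\|_{L^{2}(B(\sqrt{2}\varrho))}.
\end{equation*}
Since $\tfrac{3}{2}-\tfrac{1}{\overrightarrow{q}}\geq 0$, the elementary bound $(a+b)^{\alpha}\le C(a^{\alpha}+b^{\alpha})$ splits the right-hand side into two pieces; the second piece has total $L^{2}$ exponent $(\tfrac{3}{2}-\tfrac{1}{\overrightarrow{q}})+(\tfrac{1}{\overrightarrow{q}}-\tfrac{1}{2})=1$, which produces exactly the term $C\varrho^{-(3/2-1/\overrightarrow{q})}\|f\|_{L^{2}(B(\sqrt{2}\varrho))}$ appearing in \eqref{locan}.

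The second inequality \eqref{locanlast} is proved by the same argument with a cutoff $\varphi\in C_{0}^{\infty}(B(\eta))$ satisfying $\varphi\equiv 1$ on $B(\tfrac{\xi+3\eta}{4})$ and $\|\nabla\varphi\|_{L^{\infty}}\leq C(\eta-\xi)^{-1}$, since $\eta-\tfrac{\xi+3\eta}{4}=\tfrac{\eta-\xi}{4}$. The bookkeeping is identical and produces the factor $(\eta-\xi)^{-(3/2-1/\overrightarrow{q})}$ in place of $\varrho^{-(3/2-1/\overrightarrow{q})}$.

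No real obstacle is expected: the hypothesis $0\leq\tfrac{1}{\overrightarrow{q}}-\tfrac{1}{2}\leq 1$ is precisely what makes the exponents in Lemma \ref{zc} nonnegative, so both the interpolation and the $(a+b)^{\alpha}$ splitting are justified. The only mild point to verify is that $B(\varrho)\subset B(\sqrt{2}\varrho)$ leaves enough room between the supports to obtain a uniform cutoff with the asserted gradient bound, which is immediate from $\sqrt{2}-1>0$.
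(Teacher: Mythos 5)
Your proposal is correct and follows essentially the same route as the paper: multiply by a cutoff equal to $1$ on the inner ball and supported in the outer ball, apply the global anisotropic embedding of Lemma \ref{zc} to $f\varphi$, expand $\nabla(f\varphi)$ by the Leibniz rule, and use that the two exponents sum to $1$ to produce the lower-order term $C\varrho^{-(3/2-1/\overrightarrow{q})}\|f\|_{L^{2}}$. The treatment of \eqref{locanlast} by the same argument with gradient bound $C(\eta-\xi)^{-1}$ likewise matches the paper, which simply states that the second estimate follows along the exact same lines.
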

\begin{proof}
  Let $\phi(x)$ be non-negative smooth function supported in $B(\sqrt{2}\varrho)$ such that
$\phi(x)\equiv1$ on $B(\varrho )$, $0\leq\phi(x)\leq1$ and $|\nabla \phi| \leq  C/\varrho $.\\
Making use of\eqref{anis}, we see that
$$\ba\|f\|_{L_{1}^{q_{1}} L_{2}^{q_{2}} L_{3}^{q_{3}}(B(\varrho))}\leq&
\|f\phi\|_{L_{1}^{q_{1}} L_{2}^{q_{2}} L_{3}^{q_{3}}(\mathbb{R}^{3})}\\
\leq& C\|\nabla (f\phi)\|_{L^{2}(\mathbb{R}^{3})}^{\f32-\f{1}{\overrightarrow{q}}}\|f\phi\|^{\f{1}{\overrightarrow{q}}-\f{1}{2}}_{L^{2}(\mathbb{R}^{3})} \\\leq& C\B(\|\phi\nabla f\|_{L^{2}(\mathbb{R}^{3})}+\|f\nabla \phi\|_{L^{2}(\mathbb{R}^{3})}\B)^{\f32-\f{1}{\overrightarrow{q}}}\|f\phi\|^{\f{1}{\overrightarrow{q}}-\f{1}{2}}_{L^{2}(\mathbb{R}^{3})} \\\leq& C\|\nabla f\|_{L^{2}(B(\sqrt{2}\varrho))}^{\f32-\f{1}{\overrightarrow{q}}}\|f \|^{\f{1}{\overrightarrow{q}}-\f{1}{2}}_{L^{2}(B(\sqrt{2}\varrho))} +C\varrho^{-(\f32-\f{1}{\overrightarrow{q}})}\|f\| _{L^{2}(B(\sqrt{2}\varrho))},
\ea$$
which means \eqref{locan}.\\
Along the exact same lines as the above proof, we have \eqref{locanlast}.
This achieves the proof of the desired estimate.
\end{proof}
By the Poincar\'e inequality, we know that
$$\|f-\wred{\bar{f}_{B(\sqrt{2}\varrho)}}\|_{L^{2}(B(\sqrt{2}\varrho))}\leq
C\rho\|\nabla f\|_{L^{2}(B(\sqrt{2}\varrho))}.
$$
This allows us to derive from  \eqref{locan} that, for any $\int_{B(\sqrt{2}\varrho)}fdx=0$, \wred{$0\leq \f{1}{\overrightarrow{q}}-\f12\leq 1$,}
\be
\wred{\|f\|_{L_{1}^{q_{1} } L_{2}^{q_{2} } L_{3}^{_{q_{3} }}(B(\varrho) )}    \leq C\|\nabla f\|_{L^{2}(B(\sqrt{2}\varrho ))}^{\f32-\f{1}{\overrightarrow{q}}}\|f\|^{\f{1}{\overrightarrow{q}}-\f12}_{L^{2}(B(\sqrt{2}\varrho))},}
\label{locan0}\ee
which means that
$$\ba
\|f\|_{L^{m}_{t}L_{1}^{q_{1} } L_{2}^{q_{2} } L_{3}^{_{q_{3} }}(Q(\varrho) )}    &\leq C\|\nabla f\|_{L^{2}(Q(\sqrt{2}\varrho ))}^{\f32-\f{1}{\overrightarrow{q}}}\|f\|^{\f{1}{\overrightarrow{q}}-\f12}_{L^{\infty}_{t}L^{2}(B(\sqrt{2}\varrho))}
\\&\leq C\|\nabla f\|_{L^{2}(Q(\sqrt{2}\varrho ))}^{\f{2}{m}}\|f\|^{1-\f{2}{m}}_{L^{\infty}_{t}L^{2}(B(\sqrt{2}\varrho))},
\ea$$
where
$$
\f{2}{m}+\f{1}{\overrightarrow{q}}=\f32.
$$

Next, we state another lemma on decomposition of pressure obtained in \cite{[HWZ]} that will be used in the proof of Theorem \ref{anatones}.
 		\begin{lemma}\cite{[HWZ]}\label{lem2}
Let $\Phi$ denote  the standard normalized fundamental solution of Laplace equation in $\mathbb{R}^{3}$. For $0<\xi<\eta$, we consider smooth cut-off function $\psi\in C^{\infty}_{0}(B(\f{\xi+3\eta}{4}))$ such that $0\leq\psi\leq1$ in $B(\eta)$, $\psi\equiv1$ in $B(\f{3\xi+5\eta}{8})$  and $|\nabla^{k}\psi |\leq C/(\eta-\xi)^{k}$ with $k=1,2$ in \wgr{$B(\eta)$}.  Then we may split  pressure $\Pi$ in \eqref{NS} \wgr{as below}
			\be\label{decompose pk}
			\Pi(x):=\Pi_{1}(x)+\Pi_{2}(x)+\Pi_{3}(x), \quad x\in B(\f{\xi+\eta}{2}),
			\ee
			where
			$$\ba
			\Pi_{1}(x)=&-\partial_{i}\partial_{j}\wgr{\Phi} \ast (\psi (u_{j}u_{i})) ,\\
			\Pi_{2}(x)
			=&2\partial_{i}\wgr{\Phi} \ast(\partial_{j}\psi(u_{j}u_{i}))- \wgr{\Phi} \ast
			(\partial_{i}\partial_{j}\psi u_{j}u_{i}), \\
			\Pi_{3}(x)
			=&2\partial_{i}\wgr{\Phi} \ast(\partial_{i}\psi \Pi)-\wgr{\Phi} \ast(\partial_{i}\partial_{i}\psi \Pi).
			\ea
			$$
 Moreover, there holds
			\begin{align}
				& \| \Pi_1\|_{L^{3/2}(Q(\f{\xi+\eta}{2}))}
				\leq C\|  u\|^{2}_{L^{3}(Q(\f{\xi+3\eta}{4}))};\label{p1estimate}\\
				& \|  \Pi_2\|_{L^{3/2}(Q(\f{\xi+\eta}{2}))}
				\leq  \wgr{\f{C\eta^{3}}{(\eta-\xi)^{3}}}\|  u\|^{2}_{L^{3}(Q(\f{\xi+3\eta}{4}))};\label{p2estimate}\\
				& \|  \Pi_3\|_{\wgr{L^1L^{2}}(Q(\f{\xi+\eta}{2}))}
				\leq \wgr{\f{C\eta^{3/2 }}{(\eta-\xi)^{3}}}\|\Pi\|_{L^{1}(Q(\f{\xi+3\eta}{4}))}.\label{p3estimate}
			\end{align}
		\end{lemma}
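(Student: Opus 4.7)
The plan is to first verify the algebraic decomposition \eqref{decompose pk} by solving a localized elliptic equation for $\psi\Pi$, and then prove the three bounds by exploiting, in each case, a different feature of the convolution: Calderón-Zygmund theory for $\Pi_1$, and the fact that derivatives of the cutoff are supported away from the evaluation region for $\Pi_2$ and $\Pi_3$.

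\textbf{Step 1 (Decomposition).} Taking $\Div$ of the momentum equation and using $\Div u=0$ yields $-\Delta\Pi=\partial_i\partial_j(u_iu_j)$. Since $\psi\equiv1$ on $B(\tfrac{3\xi+5\eta}{8})\supset B(\tfrac{\xi+\eta}{2})$ and $\psi\Pi$ is compactly supported, I would represent $\psi\Pi$ by convolution with the fundamental solution $\Phi$ applied to $\Delta(\psi\Pi)$. Expanding
\[
\Delta(\psi\Pi)=\psi\Delta\Pi+2\nabla\psi\cdot\nabla\Pi+\Delta\psi\,\Pi=-\psi\,\partial_i\partial_j(u_iu_j)+2\nabla\psi\cdot\nabla\Pi+\Delta\psi\,\Pi,
\]
I would push derivatives onto $\Phi$ via integration by parts. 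For the first piece, writing $\psi\partial_i\partial_j(u_iu_j)=\partial_i\partial_j(\psi u_iu_j)-2\partial_j(\partial_i\psi\,u_iu_j)+\partial_i\partial_j\psi\,u_iu_j$ (using the $(i,j)$-symmetry of the sum) and convolving with $\Phi$ produces precisely $-\Pi_1-\Pi_2$. For the pressure pieces, the identity $\nabla\psi\cdot\nabla\Pi=\partial_i(\partial_i\psi\,\Pi)-\Delta\psi\,\Pi$ lets one move the derivative onto $\Phi$, giving exactly $\Pi_3$ after combining with $\Phi*(\Delta\psi\,\Pi)$. Assembling the pieces gives $\psi\Pi=\Pi_1+\Pi_2+\Pi_3$, which equals $\Pi$ on $B(\tfrac{\xi+\eta}{2})$.

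\textbf{Step 2 ($\Pi_1$).} The kernel $\partial_i\partial_j\Phi$ defines a standard singular integral of Calderón-Zygmund type, bounded on $L^{3/2}(\mathbb{R}^3)$. Applied slicewise in $t$,
\[
\|\Pi_1(\cdot,t)\|_{L^{3/2}(\mathbb{R}^3)}\leq C\|\psi u_iu_j(\cdot,t)\|_{L^{3/2}}\leq C\|u(\cdot,t)\|_{L^3(B(\frac{\xi+3\eta}{4}))}^2.
\]
Raising to the $3/2$ power and integrating over $t\in(-(\tfrac{\xi+\eta}{2})^2,0)\subset(-(\tfrac{\xi+3\eta}{4})^2,0)$ gives \eqref{p1estimate}.

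\textbf{Step 3 ($\Pi_2$ and $\Pi_3$).} The crucial observation is that $\nabla\psi$ and $\nabla^2\psi$ are supported in the annulus $B(\tfrac{\xi+3\eta}{4})\setminus B(\tfrac{3\xi+5\eta}{8})$, so for $x\in B(\tfrac{\xi+\eta}{2})$ and $y$ in this annulus one has $|x-y|\geq\tfrac{\eta-\xi}{8}$. Hence $|\partial_i\Phi(x-y)|\lesssim(\eta-\xi)^{-2}$ and $|\Phi(x-y)|\lesssim(\eta-\xi)^{-1}$, and combining with $|\nabla^k\psi|\lesssim(\eta-\xi)^{-k}$ yields
\[
|\Pi_2(x,t)|\leq\tfrac{C}{(\eta-\xi)^3}\int_{B(\frac{\xi+3\eta}{4})}|u(\cdot,t)|^2,\qquad |\Pi_3(x,t)|\leq\tfrac{C}{(\eta-\xi)^3}\|\Pi(\cdot,t)\|_{L^1(B(\frac{\xi+3\eta}{4}))}.
\]
For $\Pi_2$, I would estimate the $L^2$-integral by Hölder: $\int|u|^2\leq C\eta\,\|u\|_{L^3(B(\frac{\xi+3\eta}{4}))}^2$, then pass from $L^\infty$ to $L^{3/2}$ in $x$ over $B(\tfrac{\xi+\eta}{2})$ by multiplying by $|B|^{2/3}\lesssim\eta^2$, and finally integrate $3/2$ in $t$ to obtain \eqref{p2estimate}. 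For $\Pi_3$, the analogous conversion $L^\infty\to L^2$ in $x$ costs $|B|^{1/2}\lesssim\eta^{3/2}$, and taking the $L^1$-norm in $t$ gives \eqref{p3estimate}.

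\textbf{Main obstacle.} The bookkeeping in Step~1 is the most delicate: one must track the sign convention for $\Phi$ (so that $\Delta\Phi=\delta$ or $-\Delta\Phi=\delta$ matches the formulas as written), use the $(i,j)$-symmetry to combine the two ``$\partial\psi\cdot\partial(u_iu_j)$'' terms into a single factor of $2$, and eliminate the apparently problematic $\nabla\Pi$ contribution by the identity $\nabla\psi\cdot\nabla\Pi=\partial_i(\partial_i\psi\,\Pi)-\Delta\psi\,\Pi$. Once the decomposition is established, Steps~2--3 reduce to standard Calderón-Zygmund and Hölder estimates.
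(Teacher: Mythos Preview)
Your proposal is correct. The paper does not actually prove Lemma~\ref{lem2}; it is quoted from \cite{[HWZ]} and used as a black box. That said, the paper carries out an essentially identical decomposition in the proof of Lemma~\ref{presure} (see~\eqref{pp}): there $\psi\Pi$ is written as $\Phi*\{-\phi\,\partial_i\partial_j U_{i,j}+2\partial_i\phi\,\partial_i\Pi+\Pi\,\partial_i\partial_i\phi\}$ and split into $P_1+P_2+P_3$ exactly as you do, after which $P_1$ is handled by Calder\'on--Zygmund and $P_2,P_3$ by the harmonicity/separation-of-supports argument you describe in Step~3. Your bookkeeping of the powers of $\eta$ and $(\eta-\xi)$ via H\"older and the volume factors $|B|^{2/3}$, $|B|^{1/2}$ is the standard route and matches the stated constants in \eqref{p2estimate}--\eqref{p3estimate}.
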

\wred{Now we introduce Wolf's local} pressure projection $\mathcal{W}_{p,\Omega}:$ $W^{-1,p}(\Omega)\rightarrow W^{-1,p}(\Omega)$ $(1<p<\infty)$.
 More precisely, for any  $f\in W^{-1,p}(\Omega)$, we define \wred{$\mathcal{W}_{p,\Omega}(f)= \nabla\Pi$}, where $\Pi$ satisfies \eqref{GMS}.
Let $\Omega$  be a  bounded domain with $\partial\Omega\in C^{1}$.
According to the $L^p$ theorem of Stokes system in \cite[Theorem 2.1, p149]{[GSS]},
there exists a unique pair $(b,\Pi)\in W^{1,p}(\Omega)\times L^{p}(\Omega)$ such that
\be\label{GMS}
-\Delta b+\nabla\Pi=f,~~ \text{div}\,b=0, ~~b|_{\partial\Omega}=0,~~ \int_{\Omega}\Pi dx=0.
\ee
Moreover, this pair is subject to the inequality
$$
\|b\|_{\wred{W^{1,p}}(\Omega)}+\|\Pi\|_{\wred{L^p}(\Omega)}\leq C\|f\|_{\wred{W^{-1,p}}(\Omega)}.
$$
Let $\nabla\Pi= \mathcal{W}_{p,\Omega}(f)$ $(f\in L^p(\Omega))$, then $\|  \Pi\|_{L^p(\Omega)}\leq C\|f\|_{L^p(\Omega)},$ where we used the fact that $L^{p}(\Omega)\hookrightarrow W^{-1,p}(\Omega)$.  Moreover, from $\Delta \Pi=\text{div}\,f$, we see that $\|  \nabla\Pi\|_{L^p(\Omega)}\leq C(\|f\|_{L^p(\Omega)}+ \|  \Pi\|_{L^p(\Omega)}) \leq C\|f\|_{L^p(\Omega)}.$
\wred{For any} ball $B(R)\subseteq \mathbb{R}^{3}$, by the local pressure projection,
 Wolf et al. presented the pressure decomposition
 $$
- \nabla \Pi = - \partial _ { t } \nabla \Pi _ { h } - \nabla \Pi_ { 1 } - \nabla \Pi_ { 2 },
$$
where
   $$\nabla\Pi_{h}=-\mathcal{W}_{p,B(R)}(u),~~ \nabla\Pi_{1}=\mathcal{W}_{p,B(R)}(\Delta u),~~\nabla\Pi_{2}=-\mathcal{W}_{p,B(R)}( u\cdot\nabla u).$$ After denoting     $v=u+\nabla\Pi_{h}$, one gets the local
     energy inequality,
for a.e. $t\in[-T,0]$ and non-negative function $\phi(x,s)\in C_{0}^{\infty}(\mathbb{R}^{3}\times (-T,0) )$,
			 \begin{align}
  &\int_{B(r)}|v|^2\phi (x,t)  d  x+ \int^{t}_{-T }\int_{B(r)}\big|\nabla v\big|^2\wred{\phi (x,s)} d  x ds  \nonumber\\  \leq&   \int^{t}_{-T }\int_{B(r)} | v |^2(  \Delta \phi +  \partial_{t}\phi )  d  x d s +\int^{t}_{-T }\int_{B(r)}|v|^{2}u\cdot\nabla \phi    \wred{dxds}\nonumber\\
& +\int^{t}_{-T }\int_{B(r)} \phi ( u\otimes v :\nabla^{2}\Pi_{h} ) \wred{dxds}   +\int^{t}_{-T }\int_{B(r)}  \Pi_{1}v\cdot\nabla \phi   dxds+\int^{t}_{-T }\int_{B(r)}  \Pi_{2}v\cdot\nabla \phi   dxds.\label{wloc1}
 \end{align}
With this in hand, we  present the Wolf's new definition of suitable weak solutions of Navier-Stokes equations \eqref{NS}.
	\begin{definition}\label{defi}
		A  pair   $(u, \,\Pi)$  is called a suitable weak solution to the Navier-Stokes equations \eqref{NS} provided the following conditions are satisfied,
		\begin{enumerate}[(1)]
			\item $u \in L^{\infty}(-T,\,0;\,L^{2}(\mathbb{R}^{3}))\cap L^{2}(-T,\,0;\,\dot{H}^{1}(\mathbb{R}^{3})),\,\Pi\in
			L^{3/2}(-T,\,0;L^{3/2}(\mathbb{R}^{3}));$\label{SWS1}
			\item$(u, ~\Pi)$~solves (\ref{NS}) in $\mathbb{R}^{3}\times (-T,\,0) $ in the sense of distributions;\label{SWS2}
			\item The local energy inequality \eqref{wloc1} is valid and $\nabla \Pi_{h}$ is a  harmonic function. In addition, $ \nabla\Pi_{h}, \nabla\Pi_{1}$ and $\nabla\Pi_{2}$ meet the \wred{following facts}
	\begin{align}   &\|\nabla\Pi_{h}\|_{L^p(B(R))}\leq  \|u\|_{L^p(B(R))}, \label{ph}\\
 &\|\wred{\Pi_{1}}\|_{L^2(B(R))}\leq  \|\nabla u\|_{L^2(B(R))},\label{p1}\\
 &\|\wred{\Pi_{2}}\|_{L^{p/2}(B(R))}\leq  \| |u|^{2}\|_{L^{p/2}(B(R))}.\label{p2}
\end{align}	\end{enumerate}
	\end{definition}
	\noindent
We list some
interior estimates
of  harmonic functions $\Delta h=0$, which will be frequently utilized later. Let $1\leq p,q\leq\infty$ and $0<r<\rho$, then, it holds
\be\label{h1}\|\nabla^{k}h\|_{L^{q}
(B(r))}\leq \f{Cr^{\f{n}{q}}}{(\rho-r)^{\f{n}{p}+k}}\|h\|_{L^{p}(B(\rho))}.\ee
\be\label{h2}
 \| h-\overline{h}_{r}\|_{L^{q}
(B(r))}\leq \f{Cr^{\f{n}{q}+1}}{(\rho-r)^{\f{n}{q}+1 }}\|h-\overline{h}_{\rho}\|_{L^{q}(B(\rho))}.\ee

\section{Regularity criteria in anisotropic Lebesgue space at infinitely many scales}
Inspired by   \cite{[GKT]}, we present the proof of Theorem \ref{thcw1} by Lemma \ref{ineq} and Lemma \ref{presure}.

\wred{By
the natural scaling} property of Navier-Stoke equations \eqref{NS}, we  introduce the following dimensionless quantities,
\begin{align}
&E_{\ast}(\varrho)=\frac{1}{\varrho}\iint_{\wgr{Q(\varrho)}}|\nabla u|^2dx dt,& E(\varrho)=\sup_{-\varrho^2\leq   t<0}\frac{1}{\varrho}\int_{B(\varrho)}|u|^2dx,\nonumber\\
&E_{p}(\varrho)=\frac{1}{r^{5-p}}\iint_{\wgr{Q(\varrho)}}|u|^{p}dx dt,&D_{3/2}(\varrho)=\frac{1}{r^{2}}\iint_{\wgr{Q(\varrho)}}|\Pi-\bar{\Pi}_{\wgr{B(\varrho)}}|^{\frac{3}{2}}dx
dt. \nonumber
\end{align}
 According to  the H\"older inequality, it suffices   to prove Theorem \ref{thcw1} for the case
 $$
 \wred{\f{2}{p}+\sum\limits_{j=1}\limits^{3}\frac{1}{q_{j}}=2.}
	 $$
Therefore, we introduce the     dimensionless quantities below
\begin{align}
  \wred{E_{p,\overrightarrow{q}}(u,\,\varrho)=
\varrho^{-1}\|u-\overline{u}_{\varrho}\|_{L^{p}_tL^{\overrightarrow{q}}_{x}(Q(\varrho))}.}
\nonumber\end{align}
To prove Theorem \ref{thcw1}, we need the  following crucial lemma.

\begin{lemma}\label{ineq}
For $0<\sqrt{6}\mu\leq \rho$,~
there is an absolute constant $C$  independent of  $\mu$ and $\rho$,~ such that
 \begin{align}
\wred{E_{3}(\mu)\leq C\B(\f{\rho}{\mu}\B)^{2}E_{p,\overrightarrow{q}} (u,\,\rho) E_{\ast}(\rho)^{1-\f{1}{p}} E^{\f{1}{p}}(\rho) +C\B(\f{\mu}{\rho}\B)E_{3}(\rho).}
\label{ineq2/2}    \end{align}
\end{lemma}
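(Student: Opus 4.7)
The plan is to prove Lemma \ref{ineq} by splitting $u = (u - \overline{u}_\rho) + \overline{u}_\rho$ and handling the two pieces separately, in the spirit of the Caffarelli-Kohn-Nirenberg and Gustafson-Kang-Tsai interpolations. First I would note that
\[
\int_{Q(\mu)}|u|^3\,dxdt \leq C\int_{Q(\mu)}|u-\overline{u}_\rho|^3\,dxdt + C\int_{-\mu^2}^{0}|\overline{u}_\rho(t)|^3\mu^3\,dt.
\]
The mean piece is immediate: Jensen's inequality gives $|\overline{u}_\rho(t)|^3 \leq C\rho^{-3}\int_{B(\rho)}|u|^3\,dx$, so after integrating and dividing by $\mu^2$ this contribution is bounded by $\frac{C\mu}{\rho}E_3(\rho)$, which is exactly the second term of \eqref{ineq2/2}.

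For the fluctuation piece I would first enlarge the spatial domain to $B(\rho)$ (the constraint $\sqrt{6}\mu \leq \rho$ is dictated by needing the Sobolev-step dilation by $\sqrt{2}$ in Lemma \ref{zcl} and the ball-to-cube containments of \eqref{simplefact} to stay inside $B(\rho)$). Anisotropic H\"older in space with conjugate exponents $\overrightarrow{q}$ and $\overrightarrow{q}'$ (where $1/q_j + 1/q_j' = 1$) yields
\[
\int_{B(\rho)}|u-\overline{u}_\rho|^3\,dx \leq \|u-\overline{u}_\rho\|_{L^{\overrightarrow{q}}(B(\rho))}\,\|u-\overline{u}_\rho\|_{L^{2\overrightarrow{q}'}(B(\rho))}^2,
\]
and H\"older in time with the conjugate pair $(p,p')$ then gives
\[
\int_{Q(\rho)}|u-\overline{u}_\rho|^3\,dxdt \leq \|u-\overline{u}_\rho\|_{L_t^p L_x^{\overrightarrow{q}}(Q(\rho))}\,\|u-\overline{u}_\rho\|_{L_t^{2p'} L_x^{2\overrightarrow{q}'}(Q(\rho))}^2.
\]

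Next I would estimate the second factor through the anisotropic Sobolev inequality of Lemma \ref{zcl}. Using $1/\overrightarrow{q}' = 3 - 1/\overrightarrow{q}$ one computes $3/2 - 1/(2\overrightarrow{q}') = 1/(2\overrightarrow{q})$ and $1/(2\overrightarrow{q}') - 1/2 = 1 - 1/(2\overrightarrow{q})$, so
\[
\|u-\overline{u}_\rho\|_{L^{2\overrightarrow{q}'}}^2 \leq C\|\nabla u\|_{L^2}^{1/\overrightarrow{q}}\,\|u-\overline{u}_\rho\|_{L^2}^{2-1/\overrightarrow{q}}.
\]
Raising to the $p'$-th power in time, pulling out $\sup_t\|u-\overline{u}_\rho\|_{L^2}^{2-1/\overrightarrow{q}}$, and invoking the critical scaling $2/p + 1/\overrightarrow{q} = 2$ — equivalently $p'(1/\overrightarrow{q}) = 2$ — transform the remaining time integral into $\int_{-\rho^2}^{0}\|\nabla u\|_{L^2}^2\,dt = \rho E_\ast(\rho)$. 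Since $\|u-\overline{u}_\rho\|_{L^2(B(\rho))}^2 \leq \|u\|_{L^2(B(\rho))}^2 \leq \rho E(\rho)$, these exponents collapse to
\[
\|u-\overline{u}_\rho\|_{L_t^{2p'} L_x^{2\overrightarrow{q}'}(Q(\rho))}^2 \leq C\rho\,E_\ast(\rho)^{1-1/p}E(\rho)^{1/p}.
\]
Combining with $\|u-\overline{u}_\rho\|_{L_t^p L_x^{\overrightarrow{q}}(Q(\rho))} = \rho E_{p,\overrightarrow{q}}(u,\rho)$ and dividing by $\mu^2$ produces the first term $C(\rho/\mu)^2 E_{p,\overrightarrow{q}}(u,\rho)\,E_\ast(\rho)^{1-1/p}E(\rho)^{1/p}$ of \eqref{ineq2/2}.

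The main technical obstacle will be the bookkeeping in the Sobolev step: verifying applicability of Lemma \ref{zcl} in the range $0 \leq 1/\overrightarrow{q} \leq 2$ implied by the hypothesis, and managing the mean-subtraction against the enlarged ball $B(\sqrt{2}\rho)$ produced by the localization cutoff (which, together with the ball-cube-ball dilations of \eqref{simplefact}, produces the factor $\sqrt{6}$ in the hypothesis $\sqrt{6}\mu \leq \rho$). The algebraic part — that the H\"older pairing $L_t^p L_x^{\overrightarrow{q}} \leftrightarrow L_t^{2p'} L_x^{2\overrightarrow{q}'}$ forces the exponents of $E_\ast$ and $E$ to land at exactly $1-1/p$ and $1/p$ — is a direct consequence of the critical identity $2/p + 1/\overrightarrow{q} = 2$, which is the equality case to which the general assumption of Theorem \ref{thcw1} has been reduced by H\"older in time.
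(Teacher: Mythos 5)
Your proposal is correct and follows essentially the same route as the paper: the same splitting $u=(u-\overline{u}_{\rho})+\overline{u}_{\rho}$ with Jensen for the mean part, the same H\"older pairing $L_{t}^{p}L_{x}^{\overrightarrow{q}}\times L_{t}^{2p'}L_{x}^{2\overrightarrow{q}'}$ for the fluctuation, the same anisotropic Sobolev/interpolation step from Lemma \ref{zcl}, and the same use of the critical identity $\f2p+\f{1}{\overrightarrow{q}}=2$ to land the exponents on $E_{\ast}(\rho)^{1-\f1p}E(\rho)^{\f1p}$. The radius bookkeeping (the $\sqrt{2}$ dilation and the ball--cube containments giving $\sqrt{6}$) matches the paper's as well.
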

\begin{proof}
 We begin by proving the following crucial estimate
\be\ba\label{key2.9} \iint_{Q(\varrho)} |v|^{3}dxds&=\iint_{Q(\varrho)} |v| |v|^{2}dxds
\\& \leq C \|v\|_{\wred{L_{t}^{p}L_x^{\overrightarrow{q}}}(Q(\varrho))} \|v\|^{2}_{\wred{L_{t}^{2p^{\ast}}L_x^{2\overrightarrow{q}^{\ast}}}(Q(\varrho))}
\\& \leq C \|v\|_{\wred{L_{t}^{p}L_x^{\overrightarrow{q}}}(Q(\varrho))} \|\nabla v\|_{L^{2}(Q(\sqrt{2}\varrho ))}^{\f{2}{p^{\ast}}}\|v\|^{2-\f{2}{p^{\ast}}}_{L^{\infty}_{t}\wred{L_x^{2}(Q(\sqrt{2}\varrho))}},
\\& \leq C \|v\|_{\wred{L_{t}^{p}L_x^{\overrightarrow{q}}}(Q(\varrho))} \|\nabla v\|_{L^{2}(Q(\sqrt{2}\varrho ))}^{2-\f{2}{p}}\|v\|^{\f{2}{p}}_{L^{\infty}_{t}\wred{L_x^{2}(Q(\sqrt{2}\varrho))}},
\ea\ee
where $v=u-\bar{u}_{\sqrt{2}\varrho}$.

\wred{Noticing that} $\f{2}{p}+\f{1}{\overrightarrow{q}}=2$, we infer that
\be\ba\label{lem2.3.2.7}
\iint_{\wgr{Q(\varrho)}}|v|^{3}dxdt & \leq C\|v\|_{L_{t}^{p}L^{\overrightarrow{q}}_{x}\wgr{(Q(\varrho))}}  \|\nabla v\|_{L^{2}(Q(\sqrt{2}\varrho))}^{ \f{1}{\overrightarrow{q}} } \wred{\|v\|^{2-\f{1}{\overrightarrow{q}}}_{L_t^{\infty}L_x^{2}\wgr{(Q(\sqrt{2}\varrho))}},}
\ea\ee
\wred{which entails}  that
\be\ba
&\iint_{Q( \varrho)}|u- \overline{u}_{\sqrt{2}\varrho}|^{3}dxdt \\
\leq& C\|u- \overline{ u}_{\sqrt{2}\varrho }\|_{L_{t}^{p}L^{\overrightarrow{q}}_{x}(Q(\varrho))}   \|\nabla u\|_{L^{2}(\wgr{Q(\sqrt{2}\varrho)})}^{ \f{1}{\overrightarrow{q}} } \|u- \overline{ u}_{\sqrt{2}\varrho} \|^{2-\f{1}{\overrightarrow{q}}}_{\wgr{L^{\infty}L^{2}}(Q(\sqrt{2}\varrho))}.\label{eq3.4}
\ea\ee
  By virtue of  the triangle inequality, \wred{we obtain}
\begin{align}\nonumber
\iint_{Q(\mu)}|u|^{3}dx\leq& C\iint_{Q(\mu)}|u-\bar{u}_{{\rho}}|^{3}dx
+C\iint_{\wred{Q(\mu)}}|\bar{u}_{{\rho}}|^{3} dx\\
\leq& C\iint_{Q(\rho/\sqrt{6})}|u-\bar{u}_{{\rho_{}}}|^{3}dx
 +
 C\f{\mu^{3}}{\rho^{3}}\B( \iint_{\wred{Q(\rho)}}|u|^{3}dx\B). \label{lem2.31}
 \end{align}
 This \wred{together with \eqref{eq3.4}} implies the desired estimate \eqref{ineq2/2}.
 \end{proof}\begin{lemma}\label{presure}
For $0<4\sqrt{6}\mu\leq \rho$, there exists an absolute constant $C$  independent of $\mu$ and $\rho$ such that
\begin{align}
&D_{3/2}(\mu)\leq
C\left(\f{\rho}{\mu}\right)
^{2}\wgr{E_{p,\overrightarrow{q} }}(\rho)\wred{E_{\ast}(\rho)^{1-\f{1}{p}} E^{\f{1}{p}}(\rho)}
+C\left(\f{\mu}{\rho}\right)^{\f{5}{2}}D_{3/2}(\rho).\label{pe}
\end{align}
\end{lemma}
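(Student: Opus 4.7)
The plan is to adapt the CKN-style pressure decomposition, using the convection-term bound \eqref{lem2.3.2.7} already established in the proof of Lemma \ref{ineq} to inject the anisotropic quantity $E_{p,\overrightarrow{q}}$ into the estimate. I would fix an intermediate radius $r=\rho/\sqrt{6}$ (so that $4\mu\leq r$) and choose $\psi\in C_{c}^{\infty}(B(r))$ with $\psi\equiv 1$ on $B(r/2)\supset B(2\mu)$ and $|\nabla^{k}\psi|\leq C r^{-k}$. Setting $v=u-\bar{u}_{\sqrt{2}r}$ and using that $\bar{u}_{\sqrt{2}r}$ is constant in space and $u$ is divergence-free, the pressure equation rewrites as $-\Delta\Pi=\partial_{i}\partial_{j}(v_{i}v_{j})$, and I split $\Pi=\Pi_{1}+\Pi_{2}$ with
\[
\Pi_{1}(\cdot,t):=-\partial_{i}\partial_{j}\Phi \ast (\psi v_{i}v_{j})(\cdot,t), \qquad \Pi_{2}:=\Pi-\Pi_{1},
\]
so that $\Delta\Pi_{2}=\partial_{i}\partial_{j}((1-\psi)v_{i}v_{j})$ vanishes on $B(r/2)$, i.e., $\Pi_{2}(\cdot,t)$ is harmonic on $B(r/2)\supset B(2\mu)$.

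For $\Pi_{1}$, the Calder\'on--Zygmund theorem gives $\|\Pi_{1}(t)\|_{L^{3/2}(\mathbb{R}^{3})}\leq C\|v(t)\|_{L^{3}(B(r))}^{2}$, so
\[
\iint_{Q(\mu)}|\Pi_{1}|^{3/2}\,dxdt \leq \int_{-r^{2}}^{0}\|\Pi_{1}\|_{L^{3/2}(\mathbb{R}^{3})}^{3/2}\,dt \leq C\iint_{Q(r)}|v|^{3}\,dxdt.
\]
Applying \eqref{lem2.3.2.7} on $Q(r)$ (whose data live on $Q(\sqrt{2}r)\subset Q(\rho)$), and absorbing $|\bar{u}_{\sqrt{2}r}-\bar{u}_{\rho}|$ via Jensen/Poincar\'e, the right-hand side is bounded by $C\rho^{2}E_{p,\overrightarrow{q}}(\rho)E_{\ast}(\rho)^{1-1/p}E(\rho)^{1/p}$. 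Dividing by $\mu^{2}$ yields the first term of \eqref{pe} with the $(\rho/\mu)^{2}$ prefactor.

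For $\Pi_{2}$, the harmonic interior estimate \eqref{h2} applied on $B(r/2)$ (using $\mu\leq r/4$) gives
\[
\int_{B(\mu)}\big|\Pi_{2}-\overline{(\Pi_{2})}_{B(\mu)}\big|^{3/2}\,dx \leq C\B(\f{\mu}{r}\B)^{9/2}\int_{B(r/2)}\big|\Pi_{2}-\overline{(\Pi_{2})}_{B(r/2)}\big|^{3/2}\,dx.
\]
Writing $\Pi_{2}=\Pi-\Pi_{1}$ and bounding the means by the triangle inequality, the right side is dominated by $C\int_{B(r)}|\Pi-\bar{\Pi}_{B(r)}|^{3/2}+C\int_{B(r)}|\Pi_{1}|^{3/2}$. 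Integrating in $t\in(-\mu^{2},0)$, dividing by $\mu^{2}$, and combining with the $\Pi_{1}$ bound above together with $D_{3/2}(r)\leq CD_{3/2}(\rho)$, I obtain a contribution of order $C(\mu/\rho)^{5/2}D_{3/2}(\rho)+C(\mu/\rho)^{5/2}E_{p,\overrightarrow{q}}(\rho)E_{\ast}(\rho)^{1-1/p}E(\rho)^{1/p}$ (the exponent $5/2=9/2-2$ coming from the $\mu^{-2}$ after the harmonic decay). Since $(\mu/\rho)^{5/2}\leq(\rho/\mu)^{2}$, the spurious $E_{p,\overrightarrow{q}}$-piece is absorbed into the first term, giving \eqref{pe}.

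The main technical obstacle is the careful handling of averages: the harmonic interior estimate naturally produces $\overline{(\Pi_{2})}_{B(r/2)}$ while $D_{3/2}(\rho)$ uses $\bar{\Pi}_{B(\rho)}$, and the transfers between these (and between $\bar{u}_{\sqrt{2}r}$, $\bar{u}_{r}$, $\bar{u}_{\rho}$) must be executed via Poincar\'e/Jensen bookkeeping without picking up extra powers of $\rho/\mu$. Apart from that, the remaining work is routine tracking of universal constants under the geometric relation $r\asymp\rho$ guaranteed by $4\sqrt{6}\mu\leq\rho$.
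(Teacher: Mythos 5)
Your proposal is correct and follows essentially the same route as the paper: a localization of the pressure equation into a Calder\'on--Zygmund part (bounded via the cubic estimate \eqref{lem2.3.2.7} to produce the $E_{p,\overrightarrow{q}}(\rho)E_{\ast}(\rho)^{1-1/p}E(\rho)^{1/p}$ term) plus a remainder that is harmonic near the origin (handled by interior estimates to produce the $(\mu/\rho)^{9/2}$ decay, which becomes $(\mu/\rho)^{5/2}$ after dividing by $\mu^{2}$). The only cosmetic difference is that you define the harmonic remainder as $\Pi-\Pi_{1}$ directly, whereas the paper writes it out explicitly as two convolution terms $P_{2}+P_{3}$ against derivatives of the cutoff; the average-transfer bookkeeping you flag is exactly what the paper carries out via the triangle inequality.
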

\begin{proof}
We consider the usual cut-off function $\phi\in C^{\infty}_{0}(B(\f{\rho}{ \sqrt{6}}))$ such that $\phi\equiv1$ on $B(\f{3\rho}{4\sqrt{6}})$ with $0\leq\phi\leq1$,
$|\nabla\phi |\leq C\rho^{-1} $ and $ ~|\nabla^{2}\phi |\leq
C\rho^{-2}.$\\
\wgr{Due to} the incompressible condition, the pressure equation can be \wgr{written as}
$$
\partial_{i}\partial_{i}(\Pi\phi)=-\phi \partial_{i}\partial_{j} U_{i,j}
+2\partial_{i}\phi\partial_{i}\Pi+\Pi\partial_{i}\partial_{i}\phi
,$$
where $U_{i,j}=(u_{j}- \bar{u}_{{\rho_{/\sqrt{6}}}})(u_{i}-\bar{u}_{{\rho_{/\sqrt{6}}}})$. \wgr{Thus it follows} that, for $x\in B(\f{3\rho}{4\sqrt{6}})$
\be\ba\label{pp}
\Pi(x)=&\Phi\ast \{-\phi \partial_{i}\partial_{j} U_{i,j}
+2\partial_{i}\phi\partial_{i}\wgr{\Pi}+\wgr{\Pi}\partial_{i}\partial_{i}\phi
\}\\
=&-\partial_{i}\partial_{j}\Phi \ast (\phi  U_{i,j} )\\
&+2\partial_{i}\Phi \ast(\partial_{j}\phi U_{i,j} )-\Phi \ast
(\partial_{i}\partial_{j}\phi U_{i,j} )\\
& \wgr{+2}\partial_{i}\Phi \ast(\partial_{i}\phi \Pi) -\Phi \ast(\partial_{i}\partial_{i}\phi \Pi)\\
\wgr{=:} &P_{1}(x)+P_{2}(x)+P_{3}(x),
\ea\ee
where $\Phi$  stands for the standard normalized fundamental solution of Laplace equation in $\mathbb{R}^{3}$.\\
Since $\phi(x)=1, $ where $x\in B(\mu)$  ($0<\mu\leq\f{\rho}{2\sqrt{6}} $), we have
\[
\Delta(P_{2}(x)+P_{3}(x))=0.
\]
 According to the interior  estimate of harmonic function
and the H\"older  inequality, we thus have, for every
$ x_{0}\in B(\f{\rho}{4\sqrt{6}} )$,
\be\ba
|\nabla (P_{2}+P_{3})(x_{0})|&\leq \f{C}{\rho^{4}}\|(P_{2}+P_{3})\|_{L^{1}(B_{x_{0}}(\f{\rho}{4\sqrt{6}}))}
\\
&\leq \f{C}{\rho^{4}}\|(P_{2}+P_{3})\|_{L^{1}(B(\f{\rho}{2\sqrt{6}}))}\\
&\leq \f{C}{\rho^{4}}\rho^{3(1-\f{1}{q})} \|(P_{2}+P_{3})\|_{\wgr{L^{3/2}}(B(\f{\rho}{2\sqrt{6}}))}.\label{lem2.4.2.15}
\ea\ee
We infer from \eqref{lem2.4.2.15} that
\be\wgr{\|\nabla (P_{2}+P_3)\|^{3/2}_{L^{\infty}(B(\f{\rho}{4\sqrt{6}}))}\leq C \rho^{-9/2}\|P_2+P_3\|^{3/2}_{L^{3/2}(B(\f{\rho}{2\sqrt{6}}))}}.\label{lem2.4.2.16}\ee
Using the  mean value theorem  and \eqref{lem2.4.2.16} , for any  $\mu\leq \f{\rho}{4\sqrt{6}}$, we arrive at
\be\ba\label{lem2.4.2.17}
\|(P_{2}+P_{3})-\overline{(P_{2}+P_{3})}_{\mu}\|^{3/2}_{L^{3/2}(B(\mu))}\leq&
C\mu^{3} \|(P_{2}+P_{3})-\overline{(P_{2}+P_{3})}_{\mu}\|^{\wgr{3/2}}_{L^{\infty}(B(\mu))}\\
\leq& C
\mu^{3} (2\mu)^{\wgr{3/2}}\|\nabla (P_{2}+P_{3})\|^{\wgr{3/2}}_{L^{\infty}(B(\f{\rho}{4\sqrt{6}}))}\\
\leq& C\Big(\f{\mu}{\rho}\Big)^{\f{9}{2}}\|(P_{2}+P_{3})\|^{3/2}_{L^{3/2}
(B(\f{\rho}{2\sqrt{6}}))}.
\ea\ee
By time integration, we get
$$
\|(P_{2}+P_{3})-\overline{(P_{2}+P_{3})}_{\mu}\|
^{\f{3}{2}}_{L^{\f{3}{2}}(Q(\mu))}\leq C\Big(\f{\mu}{\rho}\Big)^{ \f{9}{2}}
\|(P_{2}+P_{3})\|^{\f{3}{2}}_{L^{\f{3}{2}}(Q(\f{\rho}{2\sqrt{6}}))}.
$$
As $(P_{2}+P_{3})-((P_{2}+P_{3}))_{B(\f{\rho}{2\sqrt{6}})}$ is also a Harmonic function  on $B(\f{\rho}{2\sqrt{6}})$, we deduce taht
$$\ba
&\|(P_{2}+P_{3})-\overline{(P_{2}+P_{3})}_{\mu}\|
^{\f{3}{2}}_{L^{3/2}(Q(\mu))}
\\
\leq & C\Big(\f{\mu}{\rho}\Big)^{ \f{9}{2}}
\|(P_{2}+P_{3})-\overline{(P_{2}+P_{3})}_{\f{\rho}{2\sqrt{6}}}\|
^{\f{3}{2}}
_{L^{\f{3}{2}}(Q(\f{\rho}{2\sqrt{6}}))}.
\ea$$
The triangle inequality guarantees that
$$\ba
&\|(P_{2}+P_{3})-\overline{(P_{2}+P_{3})}_{(\f{\rho}{2\sqrt{6}})}\|_{L^{\f{3}{2}}(Q(\f{\rho}{2\sqrt{6}}))}\\
\leq& \|\Pi-\overline{\Pi}_{(\f{\rho}{2\sqrt{6}})}\|_{L^{\f{3}{2}}(Q(\f{\rho}{2\sqrt{6}}))}
+\|P_{1}-\overline{P_{1}}_{(\f{\rho}{2\sqrt{6}})}\|_{L^{\f{3}{2}}(Q((\f{\rho}{2\sqrt{6}})))}
\\
\leq& \wgr{C}\|\wred{\Pi-\overline{\Pi}_{\rho}}\|_{L^{\f{3}{2}}(Q(\f{\rho}{2\sqrt{6}}))}
+\wgr{C}\|P_{1}\|_{L^{\f{3}{2}}(Q(\f{\rho}{2\sqrt{6}}))},
\ea$$
which leads to that
\be\label{p2rou}\ba
&\|(P_{2}+P_{3})-\overline{(P_{2}+P_{3})}_{\mu}\|
^{\f{3}{2}}_{L^{\f{3}{3}}(Q(\mu))}\\
\leq& C\Big(\f{\mu}{\rho}\Big)^{ \f{9}{2}}\Big(\|\Pi-\overline{\Pi}
_{(\rho)}\|^{\f{3}{2}}_{L^{\f{3}{2}}(Q(\f{\rho}{2\sqrt{6}}))}
+\|P_{1}\|^{\f{3}{2}}_{L^{\f{3}{2}}(Q(\f{\rho}{2\sqrt{6}}))}\Big)
\\
\leq& C\Big(\f{\mu}{\rho}\Big)^{ \f{9}{2}}\Big(\|\Pi-\overline{\Pi}
_{(\rho)}\|^{\f{3}{2}}_{L^{\f{3}{2}}(Q(\rho))}
+\|P_{1}\|^{\f{3}{2}}_{L^{\f{3}{2}}(Q(\f{\rho}{2\sqrt{6}}))}\Big).
\ea
\ee
By virtue of the H\"older inequality and the argument in \eqref{key2.9}, we get
$$\ba
&\iint_{Q(\rho/\sqrt{6})}|u-\bar{u}_{{\rho_{/\sqrt{6}}}}|^{3}dxds\\
\leq& C\iint_{Q(\rho/\sqrt{6})}|\wred{u- \overline{u}_{\rho/\sqrt{3}} }|^{3}dxds\\
 \leq& C \|u- \overline{ u_{\rho} }\|_{L^{p}L^{\overrightarrow{q}}(Q(\rho))}  \|\nabla u\|_{L^{2} (Q(\rho))}^{2-\f{2}{p} } \|u\|^{\f{2}{p}}_{L^{\infty}L^{2}(Q(\rho))}.
 \ea$$
The classical Calder\'on-Zygmund Theorem and the latter inequality implies that
\be\label{lem2.4.2}\ba
\iint_{Q(\f{\rho}{2\sqrt{6}})}|P_{1}(x)|^{\f{3}{2}}dxds
\leq& C \iint_{Q(\f{\rho}{\sqrt{6}})}|u-\overline{u}_{\rho/\sqrt{6}}|^{3}
dx\\
\leq&  C  \|u- \overline{ u_{\rho} }\|_{L^{p}L^{\overrightarrow{q}}(Q(\rho))}  \|\nabla u\|_{L^{2} (Q(\rho))}^{2-\f{2}{p} } \|u\|^{\f{2}{p}}_{L^{\infty}L^{2}(Q(\rho))},
 \ea\ee
and
\be\label{lem2.4.3}\ba
\iint_{Q(\mu)}|P_{1}(x)|^{\f{3}{2}}dx& \leq C  \|u- \overline{ u_{\rho} }\|_{L^{p}L^{\overrightarrow{q}}(Q(\rho))}  \|\nabla u\|_{L^{2} (Q(\rho))}^{2-\f{2}{p} } \|u\|^{\f{2}{p}}_{L^{\infty}L^{2}(Q(\rho))}.
 \ea\ee
The \wgr{inequalities} \eqref{p2rou}-\eqref{lem2.4.3} \wgr{allow} us to deduce that
\be\label{lem2.3}
\ba
\iint_{Q(\mu)}|\Pi-\Pi_{\mu}|^{\f{3}{2}}dxds \leq& C\iint_{Q(\mu)}
|P_{1}-(P_{1})_{\mu}|^{\f{3}{2}}
+\wgr{\big|P_{2}+P_{3}-(P_{2}+P_{3})_{\mu}\big|^{\f{3}{2}}}dx \\
\leq& C \|u- \overline{ u_{\rho} }\|_{L^{p}L^{\overrightarrow{q}}(Q(\rho))}  \|\nabla u\|_{L^{2} (Q(\rho))}^{2-\f{2}{p} } \|u\|^{\f{2}{p}}_{L^{\infty}L^{2}(Q(\rho))}
 \\
& +C\left(\f{\mu}{\rho}\right)
^{\f{9}{2}}\int_{B(\rho)}|\Pi-\Pi_{\rho}|^{\f{3}{2}}.
\ea
\ee
We readily get
\be\label{lem2.42}
\ba
\f{1}{\mu^2}\iint_{Q(\mu)}|\Pi-\Pi_{\mu}|^{\f{3}{2}}
\leq&  \wred{C\f{1}{\mu^2}}\|u- \overline{ u_{\rho} }\|_{L^{p}L^{\overrightarrow{q}}(Q(\rho))}  \|\nabla u\|_{L^{2} (Q(\rho))}^{2-\f{2}{p} } \|u\|^{\f{2}{p}}_{L^{\infty}L^{2}(Q(\rho))}\\
& +C\left(\f{\mu}{\rho}\right)
^{\f{5}{2}}
\f{1}{\rho^{2}}\iint_{Q(\rho)}|\Pi-\overline{\Pi}_{\rho}|^{\f{3}{2}}dx,
\ea
\ee
which leads to
\begin{align}
D_{3/2}(\mu)\leq &
C\left(\f{\rho}{\mu}\right)
^{2}\wred{E_{p,\overrightarrow{q}}(\rho)E_{\ast}(\rho)^{1-\f{1}{p}} E^{\f{1}{p}}(\rho)}
+C\left(\f{\mu}{\rho}\right)^{\f{5}{2}}D_{3/2}(\rho)\wgr{.}\label{presure4}
\end{align}
The proof of this lemma is   completed.
\end{proof}
\begin{proof}[Proof of Theorem \ref{thcw1}]
\wred{From} \eqref{spe1}, we know that
 there is a constant $\varrho_0$ such that, for any $\varrho\leq \varrho_{0}$,
$$
\wred{\varrho^{1-\f{2}{p}-\f{1}{q_1}-\f{1}{q_2}-\f{1}{q_3}}
\|u-\overline{ u_{\varrho} }\|_{L_{t}^{p}L_{1}^{q_1}L_{2}^{q_2}L_{3}^{q_3}(Q(\varrho))}}\leq\varepsilon_{1}.
$$
By the Young inequality and local energy inequality \eqref{loc}, we have
\be\label{eq:88}\ba
E(\rho)+E_{\ast}(\rho)\leq& C\Big[E^{2/3}_{3}(2\rho)+E_{3}(2\rho)
+\wred{D_{3/2}}(2\rho)\Big]\\
\leq& C\Big[1+E_{3}(2\rho)
+\wred{D_{3/2}}(2\rho)\Big].
\ea\ee
From \wgr{\eqref{eq:88} and} \eqref{ineq2/2} in Lemma \ref{ineq}, we see that, for
$2\sqrt{6}\mu\leq\rho$,
\be\ba
E_{3}(\mu)\leq&C \left(\dfrac{\rho}{\mu}\right)^{2}
\wgr{E_{p,\overrightarrow{q}}}(\rho/2)\wred{E_{\ast}(\rho/2)^{1-\f{1}{p}} E^{\f{1}{p}}(\rho/2)}
    +C\left(\dfrac{\mu}{\rho}\right)E_{3}(\rho/2)\\
    \leq&C \left(\dfrac{\rho}{\mu}\right)^{2}
\wgr{E_{p,\overrightarrow{q}}}(\rho/2)\B( 1+E_{3}(\rho)
+\wred{D_{3/2}}(\rho)\B)
    +C\left(\dfrac{\mu}{\rho}\right)E_{3}(\rho/2)
\\
    \leq&C \left(\dfrac{\rho}{\mu}\right)^{2}
\wgr{E_{p,\overrightarrow{q}}}(\rho )\B( 1+E_{3}(\rho)
+\wred{D_{3/2}}(\rho)\B)
    +C\left(\dfrac{\mu}{\rho}\right)E_{3}(\rho ).\label{3.2}    \ea\ee
It follows form  \eqref{pe} in Lemma \ref{presure} that, for $8\sqrt{6}\mu\leq\rho$,
\be\label{3.3}
D_{3/2}(\mu)\leq
C\left(\f{\rho}{\mu}\right)
^{2}\wgr{E_{p,\overrightarrow{q}}}(\rho)\B(1+E_{3}(\rho)
+D_{3/2}(\rho)\B)
+C\left(\f{\mu}{\rho}\right)^{\f{5}{2}}D_{3/2}(\rho).
\ee
Before going further, we set
$$F(\mu)= E_{3}(\mu)+\wred{D_{3/2}(\mu)}.$$
With the help of \eqref{3.2}  and \eqref{3.3}, we conclude that
$$\ba
F(\mu)\leq&
C
\left(\dfrac{\rho}{\mu}\right)^{2}
\wgr{E_{p,\overrightarrow{q}}}(\rho)F(\rho)
    \wgr{+C}\left(\dfrac{\rho}{\mu}\right)^{2}
\wred{E_{p,\overrightarrow{q}}}(\rho)
+C
\left(\dfrac{\mu}{\rho}\right)
 F(\rho)\\
\leq & C_{1}\lambda^{-2}\varepsilon_{1} F(\rho)+
C_{2}\lambda^{-2} \varepsilon_{1}  +C_{3}\lambda F(\rho),
\ea$$
\wgr{where  $\lambda=\f{\mu}{\rho}\leq \f{1}{8\sqrt{6}}$} and $\rho\leq   \varrho_{0}  $.\\
Choosing $\lambda,~\varepsilon_{1}$ such that $\wgr{\theta}=2C_{3}\lambda<1$ and $\varepsilon_{1}=\min\{ \f{\wgr{\theta}\lambda^{2}}{2C_{1}} ,
 \f{(1-\wgr{\theta})\lambda^{2}\varepsilon}{2C_{2}\lambda^{-2}}
 \}$ \wgr{where $\varepsilon$ is the constant in \eqref{CKN}}, we see that
\be\label{iter}
F(\lambda\rho)\leq \wgr{\theta}F(\rho)+C_{2}\lambda^{-2} \varepsilon_{1}. \ee
We iterate  $\eqref{iter}$ to get
 \[
F(\lambda^{k}\rho)\leq \wgr{\theta}^{k}F(\rho)+\f{1}{2}\lambda^{2}\varepsilon. \]
According to the definition of $F(r)$, for a fixed $\varrho_{0}>0$, we know that there exists a positive number $ K_{0}$~such that
$$\wgr{\theta}^{K_{0}}F(\varrho_{0})\wred{\leq}\f{M(\|u\|_{L^{\infty}L^{2}},\|u\|_{L^{2}W^{1,2}},
\|\Pi\|_{L^{3/2}L^{3/2}})}{\varrho_{0}^{2}}\wgr{\theta}^{K_{0}}
\leq\dfrac{1}{2}\varepsilon \lambda^{2}.$$
We denote $\varrho_{1}:=\lambda^{K_{0}}\varrho_{0}$. Then,  for all $0<\varrho\leq \varrho_{1}$ , $\exists k\geq
K_{0}$,~such that $\lambda^{k+1}\varrho_{0}\leq \varrho\leq \lambda^{k} \varrho_{0}$, there holds
\[
 \begin{aligned}
& E_{3}(\varrho)+D_{3/2}(\varrho)\\
=&\frac{1}{\varrho^{2}}\iint_{Q(\varrho)}|  u|^3dxdt+
\frac{1}{\varrho^{2}}\iint_{Q(\varrho)}|\Pi-\overline{\Pi}_{\varrho}|^{\frac{3}{2}}dxdt\\
\leq&  \frac{1}{(\lambda^{k+1}\varrho_{0})^{2}}\iint_{Q(\lambda^{k}\varrho_{0})}|  u|^3dxdt
 +
\frac{1}{(\lambda^{k+1}\varrho_{0})^{2}}\iint_{Q(\lambda^{k}\varrho_{0})}|\Pi-\overline{\Pi}_{\lambda^{k}\varrho_{0}}|^{\frac{3}{2}}dxdt
\\
\leq & \f{1}{\lambda^{2}}F(\lambda^{k}\varrho_{0}) \\
\leq &\f{1}{\lambda^{2}}(\wgr{\theta}^{k-K_{0}}\wgr{\theta}^{K_{0}}
F(\varrho_{0})+\f{1}{2}\lambda^{2}\varepsilon )\\
\leq &\varepsilon.
 \end{aligned}
\]
This together with  \eqref{CKN} completes the \wred{proof of} Theorem \ref{thcw1}.
\end{proof}

\section{ Regularity criteria  at one scale }\label{sec5}
Before going further, we write
 \be
\alpha=\f{2}{\f{2}{p}+\f{1}{\overrightarrow{q}}}.
\label{aerfa1}\ee
By virtue of the H\"older inequality, we just need consider the case that $\alpha$
is very close to 1 to show Theorem \ref{anatones}. Therefore, for any $1<q_{i}<\infty$, we have
\be
q_{i} \leq \f{2\alpha}{\alpha-1}.
\label{q>2}\ee

\begin{lemma}\label{zc2}  Let  $\alpha$ \wgr{be given}  in \eqref{aerfa1}. For $0<\xi<\eta$, there is an absolute constant $C$   such that \wred{for $\varrho=\frac{\xi+3\eta}{4}$}
\be\ba\label{zw}
 \|u\|_{L^{3}(Q(\varrho))}^{3}\leq C
\eta^{\f{3(\alpha-1)}{2}}\|u\|^{\alpha}_{\wred{L_t^{p}L_x^{\overrightarrow{q}}(Q(\eta))}} \Big\{\big[1+
\f{\eta^{{\f{7-3\alpha-\f{4\alpha}{p}}{2}}}}
{(\eta-\varrho)^{\f{7-3\alpha-\f{4\alpha}{p}}{2}}}\big]
\|u\|_{\wred{L_t^{\infty}L_x^{2}} (Q(\eta))}^{3-\alpha}+\|\nabla u\|_{L^{2}(Q(\eta))}^{3-\alpha }\Big\}.
\ea\ee
\end{lemma}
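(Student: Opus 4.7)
The plan is to split $|u|^{3}=|u|^{\alpha}\cdot|u|^{3-\alpha}$ and route the first factor into the scaling-critical norm $L_t^{p}L_x^{\overrightarrow{q}}$ of the hypothesis, while trading the second factor, via the anisotropic Sobolev embedding, for $L_t^{\infty}L_x^{2}$ and $L^{2}$-gradient bounds. First I would apply the anisotropic H\"older inequality in $x$ with dual vectors $\overrightarrow{q}/\alpha$ and $\overrightarrow{s}$ defined by $\f{1}{s_{j}}=1-\f{\alpha}{q_{j}}$, and then H\"older in $t$ with exponents $\f{p}{\alpha}$ and $\f{p}{p-\alpha}$, to obtain
\[
\|u\|_{L^{3}(Q(\varrho))}^{3}\leq \|u\|_{L_{t}^{p}L_{x}^{\overrightarrow{q}}(Q(\varrho))}^{\alpha}\,\|u\|_{L_{t}^{\beta}L_{x}^{\overrightarrow{R}}(Q(\varrho))}^{3-\alpha},
\]
where $\beta=\f{(3-\alpha)p}{p-\alpha}$ and the mixed exponent $\overrightarrow{R}$ is defined by $\f{1}{\overrightarrow{R}}=\f{3-\alpha/\overrightarrow{q}}{3-\alpha}$. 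Together with $\f{2}{p}+\f{1}{\overrightarrow{q}}=\f{2}{\alpha}$ (which comes from \eqref{aerfa1}), a direct calculation yields the parabolic-type relation $\f{2}{\beta}+\f{1}{\overrightarrow{R}}=\f{3}{3-\alpha}$, which drives the exponent bookkeeping below.

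Next I would invoke the local anisotropic Sobolev embedding \eqref{locanlast} in Lemma \ref{zcl}, with exponent vector $\overrightarrow{R}$, on $B(\varrho)=B(\f{\xi+3\eta}{4})$ using the enclosing ball $B(\eta)$, to get at each fixed time
\[
\|u(t)\|_{L_{x}^{\overrightarrow{R}}(B(\varrho))}\leq C\|\nabla u(t)\|_{L^{2}(B(\eta))}^{\f{3}{2}-\f{1}{\overrightarrow{R}}}\|u(t)\|_{L^{2}(B(\eta))}^{\f{1}{\overrightarrow{R}}-\f{1}{2}}+C(\eta-\varrho)^{-(\f{3}{2}-\f{1}{\overrightarrow{R}})}\|u(t)\|_{L^{2}(B(\eta))}.
\]
Raising this to the $\beta$-th power via $(A+B)^{\beta}\leq C(A^{\beta}+B^{\beta})$, replacing $\|u(t)\|_{L^{2}}$ by $\|u\|_{L_{t}^{\infty}L_{x}^{2}}$, and applying H\"older in $t$ to $\int\|\nabla u(t)\|_{L^{2}}^{(\f{3}{2}-\f{1}{\overrightarrow{R}})\beta}dt$ then produces the factor $\varrho^{2-\gamma}\|\nabla u\|_{L^{2}(Q(\eta))}^{\gamma}$ with $\gamma=(\f{3}{2}-\f{1}{\overrightarrow{R}})\beta$; crucially, the subcritical assumption $\f{2}{p}+\f{1}{\overrightarrow{q}}<2$, i.e.\ $\alpha\geq 1$, is precisely what guarantees $\gamma\leq 2$, which is what legalises the H\"older step.

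Finally I would take the $1/\beta$-th root, raise to the $(3-\alpha)$-th power, and split using $(A+B)^{3-\alpha}\leq C(A^{3-\alpha}+B^{3-\alpha})$. The ``regular'' piece takes the shape $\varrho^{3(\alpha-1)/2}\|u\|_{L_{t}^{\infty}L_{x}^{2}}^{a}\|\nabla u\|_{L^{2}(Q(\eta))}^{b}$, and the two defining identities
\[
b=(3-\alpha)\Big(\f{3}{2}-\f{1}{\overrightarrow{R}}\Big)=\f{7-3\alpha-\f{4\alpha}{p}}{2},\qquad (3-\alpha)\f{2-\gamma}{\beta}=\f{3(\alpha-1)}{2},
\]
simultaneously fix the gradient exponent, the $(\eta-\varrho)^{-1}$ exponent of the singular term, and the outer $\eta^{3(\alpha-1)/2}$ prefactor. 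Since $a+b=3-\alpha$, Young's inequality $X^{a}Y^{b}\leq C(X^{a+b}+Y^{a+b})$ cleanly separates the regular piece into $\|u\|_{L_{t}^{\infty}L_{x}^{2}}^{3-\alpha}+\|\nabla u\|_{L^{2}(Q(\eta))}^{3-\alpha}$; the singular piece contributes the factor $\eta^{(7-3\alpha-4\alpha/p)/2}/(\eta-\varrho)^{(7-3\alpha-4\alpha/p)/2}$ in front of $\|u\|_{L_{t}^{\infty}L_{x}^{2}}^{3-\alpha}$ after bounding $\varrho\leq\eta$. The main obstacle is not any isolated estimate but the sustained exponent accounting; one also has to verify the admissibility conditions $R_{j}\geq 2$ and $\f{1}{\overrightarrow{R}}-\f{1}{2}\in[0,1]$ needed for Lemma \ref{zcl}, both of which reduce to the restriction $q_{j}\leq\f{2\alpha}{\alpha-1}$ recorded in \eqref{q>2}, which is available whenever $\alpha$ is taken sufficiently close to $1$.
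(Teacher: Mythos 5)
Your proposal is correct and follows essentially the same route as the paper: the splitting $|u|^{3}=|u|^{\alpha}|u|^{3-\alpha}$ via anisotropic H\"older is exactly the interpolation step of Lemma \ref{interi} with the complementary exponent vector (your $\overrightarrow{R}$ coincides with the paper's $\overrightarrow{t}=\frac{3-\alpha}{1-\alpha/q_{j}}$), followed by the local embedding \eqref{locanlast} and H\"older in time, with the same exponent identities $(3-\alpha)(\frac32-\frac{1}{\overrightarrow{R}})=\frac{7-3\alpha-\frac{4\alpha}{p}}{2}$ and leftover time exponent $\frac{3(\alpha-1)}{2}$. The only difference is cosmetic (you perform the time H\"older before rather than after the Sobolev step, and make the final Young inequality explicit), so no further comment is needed.
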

\begin{proof}
By the interpolation inequality in Lemma \ref{interi}, we find
\be\label{4.2}
\|u\|_{L^{3}(B(\varrho))}\leq \|u\|^{^{\f{\alpha}{3}}}_{L^{\overrightarrow{q}}(B(\varrho))}
\|u\|^{\f{3-\alpha}{3}}_{L^{\overrightarrow{t}}(B(\varrho))}.
\ee
This together with \eqref{q>2} implies that
$$
t_{j}=\f{3-\alpha}{1-\f{\alpha}{q_{j}}}\geq2, ~~\text{and}~~ \f{1}{\overrightarrow{t}}=\f{1+\f{2\alpha}{p}}{3-\alpha}\geq\f12.
$$
As a consequece, we can apply \eqref{locanlast} to obtain
$$\ba
\|u\|_{L^{\overrightarrow{t}}(B(\varrho))}
&\leq \wred{C
\|\nabla u \|_{L^{2}(B(\eta))}^{\f{3}{2}-\f{1}{\overrightarrow{t}}}
\|u\|_{L^{2}(B(\eta))}^{\f{1}{\overrightarrow{t}}-\f12}
+C(\eta-\varrho)^{\f{1}{\overrightarrow{t}}-\f{3}{2}}
\| u \|_{L^{2}(B(\eta))}.}
\ea$$
Plugging this into \eqref{4.2}, we know that
$$\ba
\|u\|^{3}_{L^{3}(B(\varrho))}\leq & \wred{C}\|u\|^{\alpha}_{L^{\overrightarrow{q}}(B(\eta))}
(\|\nabla u \|_{L^{2}(\eta)}^{\f{3}{2}-\wred{\f{1}{\overrightarrow{t}}}}
\|u\|_{L^{2}(B(\eta))}^{\wred{\f{1}{\overrightarrow{t}}}-\f12}
+(\eta-\varrho)^{\f{1}{\overrightarrow{t}}-\f{3}{2}}
\| u \|_{L^{2}(B(\eta))})^{3-\alpha} \\
\leq& \wred{C}\|u\|^{\alpha}_{L^{\overrightarrow{q}}(\wred{B(\eta)})}
(\|\nabla u \|_{L^{2}(B(\eta))}^{\f{7-3\alpha-\f{4\alpha}{p}}{2 }}
\|u\|_{L^{2}(B(\eta))}^{\f{\alpha-1+\f{4\alpha}{p}}{2 }}+
(\eta-\varrho)^{\wred{-\f{7-3\alpha-\f{4\alpha}{p}}{2 }}}
\| u \|_{L^{2}(B(\eta))}^{3-\alpha})\wred{.}
\ea$$
According to time integration and the Holder inequality, we see that
$$\ba
&\|u\|^{3}_{L^{3}(Q(\varrho))}\\
\leq & \wred{C\eta^{\f{3(\alpha-1)}{2}}\|u\|^{\alpha}_{L_t^{p}L_x^{\overrightarrow{q}}(Q(\eta))}} \B[\|\nabla u\|_{L^{2}(Q(\rho))}^{\f{7-3\alpha-\f{4\alpha}{p} }{2}}\|u\|^{\f{\alpha-1+\f{4\alpha}{p}}{2}}_{\wred{L_t^{\infty}L_x^{2}}(Q(\eta))}
+\f{\eta^{{\f{7-3\alpha-\f{4\alpha}{p}}{2}}}}{(\eta-\varrho)^{\f{7-3\alpha-\f{4\alpha}{p}}{2}}}
\|u\|^{3-\alpha}_{\wred{L_t^{\infty}L_x^{2}}(Q(\eta))}\B]
\\ \leq& \wred{C}\eta^{\f{3(\alpha-1)}{2}}\|u\|^{\alpha}_{\wred{L_t^{p}L_x^{\overrightarrow{q}}}(Q(\eta))} \Big\{\big[1+
\f{\eta^{{\f{7-3\alpha-\f{4\alpha}{p}}{2}}}}
{(\eta-\varrho)^{\f{7-3\alpha-\f{4\alpha}{p}}{2}}}\big]
\|u\|_{\wred{L_t^{\infty}L_x^{2}} (Q(\eta))}^{3-\alpha}+\|\nabla u\|_{L^{2}(Q(\eta))}^{3-\alpha }\Big\}.
\ea
$$
\end{proof}
\wred{Now}, \wgr{we prove} Theorem \ref{anatones}. Since the proof is parallel to the one used in \cite{[HWZ]}, we just sketch the proof.
\begin{proof}[Proof of Theorem \ref{anatones}] It  suffices to proof the following inequality, for any $R>0$,
\be\ba \label{key ineq}
&\|u\|^2_{L_{t}^{\infty}L_{x}^{2}(Q(R/2))}+\|\nabla u\|^2_{L^{2}(Q(R/2))} \\ \leq&    \f{C}{R^{( 4-3\alpha)/\alpha}} \|u\|_{L_{t}^{p}L^{\overrightarrow{q}}_{x}(Q(R))} ^{2} +  \f{C}{R^{( 5-3\alpha)/(\alpha-1)}} \|u\|_{L_{t}^{p}L^{\overrightarrow{q}}_{x}(Q(R))} ^{\f{2\alpha}{\alpha-1}} + \wgr{\f{C}{R^{5}}}\|\Pi\|^{2}_{L^{1}(Q(R))}.
\ea\ee
Indeed, consider $0<R/2\leq \xi<\f{3\xi+\eta}{4}<\f{\xi+\eta}{2}<\f{\xi+3\eta}{4}\wgr{<\eta\leq R}$. Let $\phi(x,t)$ be non-negative smooth function supported in $Q(\f{\xi+\eta}{2})$ such that
$\phi(x,t)\equiv1$ on $Q(\f{3\xi+\eta}{4})$,
$|\nabla \phi| \leq  C/(\eta-\xi) $ and $
|\nabla^{2}\phi|+|\partial_{t}\phi|\leq  C/(\eta-\xi)^{2} .$

The local energy inequality \eqref{loc}, the decomposition of  pressure  in Lemma \ref{lem2} and the H\"older inequality ensure that
\begin{align}
 &\int_{B(\f{\eta+\xi}{2})} |u(x,t)|^{2} \phi(x,t) dx
 +2\iint_{Q(\f{\eta+\xi}{2})}
  |\nabla u|^{2}\phi  dxds\nonumber\\\leq &  \f{C\wgr{\eta^{5/3}}}{(\eta-\xi)^{2}}\B(\iint_{Q(\wgr{\f{\xi+3\eta}{4}})} |u|^{3}
dxds\Big)^{2/3}+\f{C}{(\eta-\xi)} \iint_{Q(\wred{\f{\xi+3\eta}{4}})}  |u|^{3} dxds\nonumber\\&+ \f{C\eta^{3}}{(\eta-\xi)^{4}}  \|u\|^{3}_{L^{3}(Q(\wgr{\f{\xi+3\eta}{4}}))}+\f{C\eta^{3/2}}{(\eta-\xi)^{4}} \|  \Pi\|_{L^{1} (Q(\eta))}  \|u\|_{\wred{L_t^{\infty}L_x^2}(Q(\eta)))}\nonumber\\
=&:I+II+III\wred{+IV.}\label{last3}
 \end{align}
Combining
  \eqref{zw} and the Young inequality, we obtain
\begin{align}\nonumber I\leq & \f{C\eta^{3+\f{2}{\alpha} }}{(\eta-\xi)^{6/\alpha}} \|u\|^{2}_{L_{t}^{p}L^{\overrightarrow{q}}(Q(\eta))}  +\f{C\eta^{3+\f{2}{\alpha} }}{(\eta-\xi)^{6/\alpha}} \big[1+
\f{\eta^{{\f{7-3\alpha-\f{4\alpha}{p}}{2}}}}
{(\eta-\xi)^{\f{7-3\alpha-\f{4\alpha}{p}}{2}}}\big]^{\f{2}{\alpha}}\|u\|^{2}_{\wred{L_{t}^{p}L_x^{\overrightarrow{q}}}(Q(\eta))}  \\&+\f{1}{6}\B(\|u\|_{\wred{L_t^{\infty}L_x^2} (Q(\eta))}^{2}+\|\nabla u\|_{L^{2}(Q(\eta))}^{2}\B),\label{last2}\\
\nonumber II
\leq & \f{C\eta^{3}}{(\eta-\xi)^{\f{2}{\alpha-1}}}\|u\|^{\f{2\alpha}{\alpha-1}}_{\wred{L_{t}^{p}L_x^{\overrightarrow{q}}}(Q(\eta))}
\B\{1+\big[1+
\f{\eta^{{\f{7-3\alpha-\f{4\alpha}{p}}{2}}}}
{(\eta-\varrho)^{\f{7-3\alpha-\f{4\alpha}{p}}{2}}}\big]^{\f{2}{\alpha-1}}\B\} \\&
+\f{1}{6}\B(\|u\|_{\wred{L_t^{\infty}L_x^2}(Q(\eta))}^{2}+\|\nabla u\|_{L^{2}(Q(\eta))}^{2}\B),\\ \nonumber
III\leq& \f{C\eta^{\f{3(\alpha+1)}{\alpha-1} }}{(\eta-\xi)^{\f{8}{(\alpha-1)}}}
\|u\|^{\f{2\alpha}{\alpha-1}}
_{\wred{L_{t}^{p}L_x^{\overrightarrow{q}}}(Q(\eta))}
\B\{1+\big[1+
\f{\eta^{{\f{7-3\alpha-\f{4\alpha}{p}}{2}}}}
{(\eta-\varrho)^{\f{7-3\alpha-\f{4\alpha}{p}}{2}}}\big]^{\f{2}{\alpha-1}}\B\} \\&
+\f{1}{6}\B(\|u\|_{\wred{L_t^{\infty}L_x^2}(Q(\eta))}^{2}+\|\nabla u\|_{L^{2}(Q(\eta))}^{2}\B).
\end{align}
Using the Young inequality again, we conclude that
\be IV\leq   \f{C\eta^{3 }}{(\eta-\xi)^{8}} \|  \Pi\|^{2}_{L^{1} (Q(\eta))} +\f{1}{6} \|u\|^{2}_{L^{2,\infty}(Q(\eta)))}.\label{last1}\ee
After plugging  \eqref{last2}-\eqref{last1} into \eqref{last3}, we apply
  iteration Lemma \cite[Lemma V.3.1,   p.161]{[Giaquinta]} to finish the proof.
\end{proof}
		
\section{ Regularity criteria  at one scale \wred{without pressure}}\label{sec4}
As explained \wred{in Section 1}, the proof of Theorem \ref{[Leraybackself]} reduces to the proof of \wred{Theorem \ref{the1.5}.} Thanks to \eqref{wolfchae}  or the results in \cite{[Wolf1],[JWZ]}, we just need to prove the following Caccioppoli type inequality \wred{given in Lemma \ref{zc22}}. We write
\be\label{aerfa2}
\wred{\alpha=\f{2}{\f{1}{\overrightarrow{q}}}.}
\ee
By the H\"older inequality, we just need consider the case that $\alpha$
is very close to 1. Therefore, for any $1<q_{i}<\infty$, we have
\be
\f{2}{3}\geq (\f{2}{3}-\f{1}{q_{i}})\alpha.
\label{q>24}\ee
\wred{For example, $\overrightarrow{q}=(1000,\f{1000}{990},\f{1000}{990})$ do not hold for \eqref{q>24}, by the H\"older inequality, we just consider $(1000,\f{1000}{999},\f{1000}{999})$ to ensure that \eqref{q>24} is valid.}
\begin{lemma}\label{zc22}  Let  $\alpha$ \wgr{be given}  in \eqref{aerfa2}. For \wred{any $R>0$}, there is an absolute constant $C$   such that
\be\ba\label{zwl}
&\|u\|^{2}_{\wred{L_t^{3}L_x^{\f{18}{5}}}(Q(\f{R}{2}))}+ \|\nabla u\|^{2}_{L^{2}(Q(\f{R}{2}))}\\
\leq &
  CR^{\f{3\alpha-4}{\alpha}}\| u\|^{2}_{\wred{L_t^{\infty}L_x^{\overrightarrow{q}}}(Q(R))}
  +CR^{\f{5\alpha-8}{2(\alpha-1)}}\| u\|^{\f{3\alpha}{2(\alpha-1)}}_{\wred{L_t^{\infty}L_x^{\overrightarrow{q}}}(Q(R))}
  +CR^{\f{4\alpha-6}{\alpha}}\| u\|^{3}_{\wred{L_t^{\infty}L_x^{\overrightarrow{q}}}(Q(R))}.
\ea\ee
\end{lemma}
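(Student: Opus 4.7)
The strategy is to establish \eqref{zwl} by the Caccioppoli-type scheme used in the proof of Theorem \ref{anatones}, carried out inside Wolf's local suitable weak solution framework (Definition \ref{defi}) so that the pressure does not appear on the right-hand side. I will work on concentric cylinders $Q(\xi)$ and $Q(\eta)$ with $R/2\leq \xi<\eta\leq R$, bound $E(\xi)+E_{\ast}(\xi)$ by a small multiple of $E(\eta)+E_{\ast}(\eta)$ plus the three terms listed in \eqref{zwl}, and then close the estimate through the iteration lemma of Giaquinta. Once $E(R/2)+E_{\ast}(R/2)$ is under control, the standard parabolic interpolation $\|u\|_{L^{3}_{t}L^{18/5}_{x}}\leq C\|u\|^{1/3}_{L^{\infty}_{t}L^{2}_{x}}\|\nabla u\|^{2/3}_{L^{2}}$, followed by Young's inequality, immediately yields the left-hand side of \eqref{zwl}.

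The main analytic input is the anisotropic three-term interpolation combined with Lemma \ref{zcl}. Choosing a cut-off $\phi$ supported in $Q(\eta)$ with $\phi\equiv 1$ on $Q(\xi)$ and $|\nabla^{k}\phi|\leq C(\eta-\xi)^{-k}$, and writing $v=u+\nabla \Pi_{h}$, the local energy inequality \eqref{wloc1} produces the cubic term $\iint |v|^{2}u\cdot\nabla\phi$, the harmonic contribution $\iint \phi(u\otimes v:\nabla^{2}\Pi_{h})$, the two pressure terms $\iint \Pi_{1}v\cdot\nabla\phi$ and $\iint \Pi_{2}v\cdot\nabla\phi$, and the heat-type piece $\iint |v|^{2}(\Delta\phi+\partial_{t}\phi)$. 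To handle the cubic contributions I will mimic Lemma \ref{zc2}: interpolate $\|u\|_{L^{3}_{x}}\leq \|u\|^{\alpha/3}_{L^{\overrightarrow{q}}_{x}}\|u\|^{(3-\alpha)/3}_{L^{\overrightarrow{t}}_{x}}$ with $\overrightarrow{t}$ defined by $\tfrac{1}{3}=\tfrac{\alpha/3}{q_{j}}+\tfrac{(3-\alpha)/3}{t_{j}}$, giving $\tfrac{1}{\overrightarrow{t}}=\tfrac{1}{3-\alpha}$. The hypothesis \eqref{q>24} is exactly what forces $t_{j}\geq 2$ and keeps $1/\overrightarrow{t}-1/2\in[0,1]$, so that the local anisotropic Sobolev embedding \eqref{locanlast} converts the $L^{\overrightarrow{t}}$-factor into $\|\nabla u\|^{3/2-1/\overrightarrow{t}}_{L^{2}}\|u\|^{1/\overrightarrow{t}-1/2}_{L^{2}}$ plus the boundary term carrying the prefactor $(\eta-\xi)^{1/\overrightarrow{t}-3/2}$. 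Since the assumed norm is $L^{\infty}_{t}L^{\overrightarrow{q}}_{x}$, no time exponent appears on the $L^{\overrightarrow{q}}$-factor and I can simply take the supremum in $t$ for free.

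The remaining work is bookkeeping. Wolf's estimates \eqref{ph}--\eqref{p2} reduce the harmonic and pressure pieces to polynomials in $\|u\|_{L^{\overrightarrow{q}}}$, $\|u\|_{L^{2}}$, and $\|\nabla u\|_{L^{2}}$ on $Q(\eta)$, weighted by negative powers of $(\eta-\xi)$ coming from $\nabla\phi$ and $\nabla^{2}\phi$. Applying Young's inequality with a small parameter to each such term, I can absorb a fraction $\tfrac{1}{6}(E(\eta)+E_{\ast}(\eta))$ on the left and reach an inequality of the shape $E(\xi)+E_{\ast}(\xi)\leq \tfrac{1}{2}(E(\eta)+E_{\ast}(\eta))+C\Phi(R)$, where $\Phi(R)$ is the right-hand side of \eqref{zwl}. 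Lemma V.3.1 of Giaquinta then closes the iteration. The main obstacle is the exponent accounting: one must verify that the three $R$-powers and the three $\|u\|_{L^{\infty}_{t}L^{\overrightarrow{q}}_{x}}$-powers $2$, $3\alpha/(2(\alpha-1))$ and $3$ appearing in \eqref{zwl} come out correctly from Young's inequality applied, respectively, to the linear heat-type/$\Pi_{1}$ contribution, to the quartic piece arising from $\Pi_{2}$ combined with the $u^{2}v$ self-interaction, and to the pure cubic pressure-free part. It is precisely here that the restriction \eqref{q>24} on $\overrightarrow{q}$ is used in full.
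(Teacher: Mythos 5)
Your overall architecture (Wolf's local energy inequality \eqref{wloc1} with $v=u+\nabla\Pi_{h}$, anisotropic interpolation combined with the local Sobolev inequality \eqref{locanlast}, Young's inequality, and Giaquinta's iteration lemma) coincides with the paper's, but two concrete choices you make prevent the argument from delivering \eqref{zwl} as stated.

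First, the interpolation split. You take $\|u\|_{L^{3}}\leq\|u\|_{L^{\overrightarrow{q}}}^{\alpha/3}\|u\|_{L^{\overrightarrow{t}}}^{(3-\alpha)/3}$ with $\f{1}{\overrightarrow{t}}=\f{1}{3-\alpha}$. Since $\alpha>1$, this gives $\f{1}{\overrightarrow{t}}-\f12=\f{\alpha-1}{2(3-\alpha)}>0$, so \eqref{locanlast} returns the product $\|\nabla u\|_{L^{2}}^{\f32-\f{1}{\overrightarrow{t}}}\|u\|_{L^{2}}^{\f{1}{\overrightarrow{t}}-\f12}$ and a genuine factor of $\|u\|_{L^{\infty}_{t}L^{2}_{x}}$ survives in every cubic term. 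The paper instead takes $\theta=\f{\alpha}{4-\alpha}$, which is exactly the choice making $\f{1}{\overrightarrow{t}}=\f12$: then $\|u\|_{L^{\overrightarrow{t}}}\leq C\|\nabla u\|_{L^{2}}+C\rho^{\f{1}{\overrightarrow{t}}-\f{1}{\overrightarrow{q}}}\|u\|_{L^{\overrightarrow{q}}}$ contains no $L^{2}$ norm of $u$ at all, and \eqref{q>24} is calibrated precisely so that $t_{j}\geq2$ for this $\theta$ (the condition $t_{j}\geq2$ for your $\theta$ is a different, weaker inequality, so your claim that \eqref{q>24} is ``exactly'' what forces it is inaccurate, though harmless). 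This matters for the exponents: Young's inequality applied to the paper's cubic bound $\rho^{\cdots}\|u\|_{L^{\infty}_{t}L^{\overrightarrow{q}}_{x}}^{\f{3\alpha}{4-\alpha}}\|\nabla u\|_{L^{2}}^{\f{3(4-2\alpha)}{4-\alpha}}$ produces the power $\f{3\alpha}{2(\alpha-1)}$ appearing in \eqref{zwl}, whereas your split produces $\f{2\alpha}{\alpha-1}$; you would therefore prove a variant of the lemma with different exponents, not \eqref{zwl} itself.

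Second, the iterated quantity. You iterate $E(\xi)+E_{\ast}(\xi)$ with $E=\sup_{t}\f{1}{\varrho}\int|u|^{2}$ and rely on absorbing $\epsilon\|u\|^{2}_{L^{\infty}_{t}L^{2}_{x}(Q(\eta))}$ into the left-hand side. But \eqref{wloc1} controls $\sup_{t}\int|v\phi^{2}|^{2}$, not $\sup_{t}\int|u|^{2}$; passing from $v$ back to $u$ in $L^{\infty}_{t}L^{2}_{x}$ requires an interior bound on $\|\nabla\Pi_{h}\|_{L^{\infty}_{t}L^{2}_{x}}$, and this cannot be read off from \eqref{ph}, because in the relevant range ($\alpha$ close to $1$, hence some $q_{j}<2$) one has $L^{\overrightarrow{q}}\not\hookrightarrow L^{2}$ locally. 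The paper sidesteps this entirely: its iterated quantity is $\|u\|^{2}_{L^{3}_{t}L^{18/5}_{x}}+\|\nabla u\|^{2}_{L^{2}}$, the only absorbed term is $\f{3}{16}\|\nabla u\|^{2}_{L^{2}(Q(\rho))}$, and $\|u\|_{L^{\infty}_{t}L^{2}_{x}}$ never appears on either side. If you keep your scheme you must supply the missing interior $L^{\infty}_{t}L^{2}_{x}$ control of $\nabla\Pi_{h}$ (e.g.\ via harmonicity together with $\|\nabla\Pi_{h}\|_{L^{1}}\lesssim\|u\|_{L^{1}}\lesssim\|u\|_{L^{\overrightarrow{q}}}$ on a bounded ball); as written, that step is a gap.
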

\begin{proof}
\wred{Consider} $0<R/2\leq r<\f{3r+\rho}{4}<\f{r+\rho}{2}<\rho\leq R$. Let $\phi(x,t)$ be non-negative smooth function supported in $Q(\f{r+\rho}{2})$ such that
$\phi(x,t)\equiv1$ on $Q(\f{3r+\rho}{4})$,
$|\nabla \phi| \leq  C/(\rho-r) $ and $
|\nabla^{2}\phi|+|\partial_{t}\phi|\leq  C/(\rho-r)^{2} .$

The interpolation inequality in \cite{[BP]} allows us to arrive at \be\label{4.24}
\|u\|_{L^{3}\wred{(B(\f{r+3\rho}{4}))}}\leq \|u\|^{^{\f{\alpha}{4-\alpha}}}_{L^{\overrightarrow{q}}(\wred{B(\f{r+3\rho}{4})})}
\|u\|^{\f{2(2-\alpha)}{4-\alpha}}_{L^{\overrightarrow{t}}(\wred{B(\f{r+3\rho}{4})})},
\ee
Combining this and  \eqref{q>24} yields that
$$
\f{1}{\overrightarrow{t}}=\f12, ~~\text{and}~~~t_{j}=\f{6q_{i}(2-\alpha)}{(4-\alpha)q_j-3\alpha}\geq2.
$$
Thence, inequality \eqref{locanlast} gives
$$\ba
\|u\|_{L^{\overrightarrow{t}}(\wred{B}(\f{r+3\rho}{4}))}&\leq
\|u-\wred{\overline{u}_{\rho}} \|_{L^{\overrightarrow{t}}(B(\f{r+3\rho}{4}))}
+\|\wred{\overline{u}_{\rho}}\|_{L^{\overrightarrow{t}}(B(\f{r+3\rho}{4}))}\\
&\leq\wred{C}
\|\nabla u \|_{L^{2}(\wred{B(\rho)})}
+\wred{C}\rho^{\f{1}{\overrightarrow{t}}-\f{1}{\overrightarrow{q}}}
\| u \|_{L^{\overrightarrow{q}}(\wred{B(\rho)})}.
\ea$$
\wred{Plugging this into \eqref{4.24}}, we infer that \wred{for $\theta=\frac{\alpha}{4-\alpha}$,}
$$\ba
\|u\|^{3}_{L^{3}B(\f{r+3\rho}{4})}&\leq \wred{C}\|u\|^{3\theta}_{L^{\overrightarrow{q}}(B(\f{r+3\rho}{4}))}
\|u\|^{3(1-\theta)}_{L^{\overrightarrow{t}}(B(\f{r+3\rho}{4}))} \\ &\leq\wred{C}
\|u\|^{3\theta}_{\wred{L^{\overrightarrow{q}}}(B(\f{r+3\rho}{4}))}\B[\|\nabla u\|_{L^{2}\wred{(B(\rho))}}+\rho^{\f{1}{\overrightarrow{t}}-\f{1}{\overrightarrow{q}}}\|u\|_{L^{\overrightarrow{q}}(\wred{B(\rho)})}
\B]^{3(1-\theta)}.
\ea$$
As a consequence, we infer \wred{that}
$$
\|u\|^{3}_{L^{3}(Q(\f{r+3\rho}{4}))}\leq \wred{C}\rho^{\f{4(\alpha-1)}{4-\alpha}}\|u\|^{\f{3\alpha}{4-\alpha}}_{\wred{L_t^{\infty}L_x^{\overrightarrow{q}}}(Q(\rho))} \|\nabla u\|_{L^{2}(Q(\rho))}^{\f{3(4-2\alpha)}{4-\alpha}}
+\wred{C}\rho^{\f{5\alpha-6}{\alpha}}\|u\|^{3}_{\wred{L_t^{\infty}L_x^{\overrightarrow{q}}}(Q(\rho))}.
 $$

Let $\nabla\Pi_{h}=\mathcal{W}_{3,B(\f{r+3\rho}{4})}(u)$, then,
from \eqref{ph}-\eqref{p2}, we have
\begin{align}
&\|\nabla \Pi_{h}\|_{L^{3}(Q(\f{r+3\rho}{4}))}\leq C\|u\|_{L^{3}(Q(\f{r+3\rho}{4}))},\label{wp1}\\
 &\|  \Pi_{1}\|_{L^{2}(Q(\f{r+3\rho}{4}))}\leq C\|\nabla u\|_{L^{2}(Q(\f{r+3\rho}{4}))},\label{wp2}\\
 &\|  \Pi_{2}\|_{L^{\f{3}{2}}(Q(\f{r+3\rho}{4}))}\leq C\|  |u|^{2}\|_{L^{\f{3}{2}}(Q(\f{r+3\rho}{4}))}.\label{wp3}
 \end{align}
Since $v=u+\nabla\Pi_{h}$, the H\"older inequality and \eqref{wp1} allows us to write
\begin{align}
 \iint_{Q(\rho)} \wred{| v|^2}\Big|  \Delta \phi^{4}+  \partial_{t}\phi^{4}\Big|    \leq& \f{C}{(\rho-r)^{2}}\iint_{Q(\f{r+\rho}{2})} |u|^{2}+|\nabla\Pi_{h}|^{2}
\nonumber\\\leq& \f{C\rho^{5/3}}{(\rho-r)^{2}}\B(\iint_{Q(\f{r+\rho}{2})} |u|^{3}+|\nabla\Pi_{h}|^{3}\B)^{\f{2}{3}}
\nonumber\\\leq& \f{C\rho^{5/3}}{(\rho-r)^{2}}\|u\|_{L^{3}(Q(\f{r+3\rho}{4}))}^{2}.\label{53.2} \end{align}
 It follows from  H\"older's inequality, $v=u+\nabla\Pi_{h}$ and  \eqref{wp1} that
 \begin{align}
 \iint_{Q(\rho)}|v|^{2}\phi^{3}u\cdot\nabla \phi
\leq \f{C}{(\rho-r)}
 \| u \|^{3}_{L^{3}(Q(\f{r+3\rho}{4}))}.
 \end{align}
According to interior estimate of harmonic function \eqref{h1} and \eqref{wp1}, we have
$$\ba\|\nabla^{2}\Pi_{h} \|_{L^{20/7}(Q(\f{r+\rho}{2}))}&\leq
\f{\wred{C} (r+\rho)  }{(\rho-r)^{ 2}}
\|\nabla\Pi_{h} \|_{L^{3}(Q( \f{r+3\rho}{4} ))}\\
&\leq
\f{ C\rho  }{(\rho-r)^{ 2}}
\|u \|_{L^{3}(Q( \f{r+3\rho}{4}))},
\ea$$
from which it follows that
\begin{align}
&\iint_{Q(\rho)} \phi^{4}( u\otimes v :\nabla^{2}\Pi_{h} )  \nonumber\\
\leq&
\| v\phi^{2}\|_{L^{3}(Q(\f{r+\rho}{2}))}
\| u \|_{L^{3}(Q(\f{r+\rho}{2}))}\|\nabla^{2}\Pi_{h} \|_{L^{3}(Q(\f{r+\rho}{2}))}
\nonumber\\
\leq &\f{ C\rho  }{(\rho-r)^{ 2}}
\|u \|^{3}_{L^{3}(Q( \f{r+3\rho}{4} ))}.
\end{align}
Taking advantage of the H\"older inequality, \eqref{wp2} and Young's inequality, we infer that
\begin{align}
\iint_{Q(\rho)} \phi^{3} \Pi_{1}v\cdot\nabla \phi
&\leq \f{C}{(\rho-r)}\| v\|_{L^{2}(Q(\f{r+\rho}{2}))}
\| \Pi_{1} \|_{L^{2}(Q(\f{r+\rho}{2}))} \nonumber\\
&\leq \f{C}{(\rho-r)^{2}}\| v\|^{2}_{L^{2}(Q(\f{r+\rho}{2}))}
+\f{1}{16}\| \Pi_{1} \|^{2}_{L^{2}(Q(\f{r+3\rho}{4}))} \nonumber\\
&\leq \f{C\rho^{5/3}}{(\rho-r)^{2}}\|u\|_{L^{3}(Q(\f{r+3\rho}{4}))}^{2}+\f{1}{16}\| \nabla u\|^{2}_{L^{2}(Q(\f{r+3\rho}{4}))}.
\end{align}
We conclude from the H\"older inequality \wred{and \eqref{wp3}   that}
\begin{align}
\iint_{Q(\rho)} \phi^{3} \Pi_{2}v\cdot\nabla \phi
 \leq \f{C}{(\rho-r)}\| v\phi^{2}\|_{L^{3}(Q(\f{r+\rho}{2}))}
\| \Pi_{2} \|_{L^{\f{3}{2}}(Q(\f{r+\rho}{2}))} \leq\f{ C }{(\rho-r)}
\|u \|^{3}_{L^{3}(Q(\f{r+3\rho}{4} ))}.\label{locp5}
\end{align}
Plugging \eqref{53.2}-\eqref{locp5} into local energy \wred{inequality \eqref{wloc1}, we infer} that
\begin{align}
\sup_{-\rho^{2}\leq t\leq0}\int_{B(\rho)}|v\phi^{2}|^2   + \iint_{Q(\rho)}\big|\nabla( v\phi^{2})\big|^2  \leq& \f{C\rho^{5/3}}{(\rho-r)^{2}}\|u\|_{L^{3}(Q(\f{r+3\rho}{4}))}^{2} +\f{ C\rho  }{(\rho-r)^{ 2}}
\|u \|^{3}_{L^{3}(Q( \f{r+3\rho}{4} ))}\nonumber\\&+\f{ C }{(\rho-r)}
\|u \|^{3}_{L^{3}(Q( \f{r+3\rho}{4} ))}\wred{+\f{1}{16}\| \nabla u\|^{2}_{L^{2}(Q(\f{r+3\rho}{4}))}.}\label{keyl}
  \end{align}
Applying the interior estimate of harmonic function
  \eqref{h1} and \eqref{wp1} implies that
$$\ba\|\nabla\Pi_{h}\|^{2}_{\wred{L_t^{3}L_x^{\f{18}{5}}}(Q(r))}&\leq \f{Cr^{\f{5}{3}}}{(\rho-r)^{2}}\|\nabla\Pi_{h}\|^{2}_{L^{3}Q(\wred{\f{r+3\rho}{4}})}\leq \f{Cr^{\f{5}{3}}}{(\rho-r)^{2}}\wred{\|u\|^{2}_{L^{3}Q(\f{r+3\rho}{4})},}
\ea$$
\wred{which together with the triangle inequality  and \eqref{keyl} ensures} that
$$\ba
 \|u\|^{2}_{\wred{L_t^{3}L_x^{\f{18}{5}}}(Q(r))} \leq& \|v\|^{2}_{\wred{L_t^{3}L_x^{\f{18}{5}}}(Q(r))}+\|\nabla\Pi_{h}\|^{2}_{\wred{L_t^{3}L_x^{\f{18}{5}}}(Q(r))}\\
\leq& C\B\{\|v\|_{\wred{L_t^{\infty}L_x^2}(Q(r))}^{2}+\|\nabla v\|_{L^{2}(Q(r))}^{2}\B\}+\f{Cr^{\f{5}{3}}}{(\rho-r)^{2}}\|u\|^{2}_{L^{3}Q(\f{r+3\rho}{4})}\\
\leq & \f{C\rho^{5/3}}{(\rho-r)^{2}}\|u\|_{L^{3}(Q(\f{r+3\rho}{4}))}^{2}+\f{ C\rho  }{(\rho-r)^{ 2}}
\|u \|^{3}_{L^{3}(Q( \f{r+3\rho}{4} ))} \\&+\f{ C }{(\rho-r)}
\|u \|^{3}_{L^{3}(Q( \f{r+3\rho}{4} ))}+\f{1}{16}\| \nabla u\|^{2}_{L^{2}(Q(\f{r+3\rho}{4}))}.
\ea$$
\wred{Using \eqref{h1} and \eqref{wp1}} once again, we know that
$$
 \|\nabla^{2} \Pi_{h}\|^{2}_{L^{2}(Q(r))}\leq \f{Cr^{3}}{(\rho-r)^{5}} \|\nabla\Pi_{h}\|^{2}_{L^{2}(Q(\f{r+\rho}{2}))}\leq \f{Cr^{3}\rho^{5/3}}{(\rho-r)^{5}} \| u \|^{2}_{L^{3}(Q(\f{r+3\rho}{4}))}.
$$
Combining  the triangle \wred{inequality and \eqref{keyl}} yields
\begin{align}
  \|\nabla u\|^{2}_{L^{2}(Q(r))}&\leq \|\nabla v\|^{2}_{L^{2}(Q(r))}+
 \|\nabla^{2} \Pi_{h}\|^{2}_{L^{2}(Q(r))}\nonumber\\\leq&
  \B\{\f{C\rho^{5/3}}{(\rho-r)^{2}}+\f{Cr^{3}\rho^{5/3}}{(\rho-r)^{5}} \B\} \| u \|^{2}_{L^{3}(Q(\f{r+3\rho}{4}))}\nonumber\\&
  +\B\{\f{ C\rho  }{(\rho-r)^{ 2}} +\f{ C }{(\rho-r)}\B\}
\|u \|^{3}_{L^{3}(Q( \f{r+3\rho}{4} ))}\wred{+\f{1}{16}\| \nabla u\|^{2}_{L^{2}(Q(\f{r+3\rho}{4}))}.}
\end{align}
Then, the following estimate holds
  $$\ba
&\|u\|^{2}_{\wred{L_t^{3}L_x^{\f{18}{5}}}(Q(r))}+ \|\nabla u\|^{2}_{L^{2}(Q(r))}
  \\\leq&
  \B\{\f{C\rho^{5/3}}{(\rho-r)^{2}}+\f{Cr^{3}\rho^{5/3}}{(\rho-r)^{5}} \B\}^{\f{4-\alpha}{\alpha}}\rho^{\f{8(\alpha-1)}{3\alpha}} \| u \|^{2}_{\wred{L_t^{\infty}L_x^{\overrightarrow{q}}}(Q(\rho))}\nonumber\\
  &+\B\{\f{C\rho^{5/3}}{(\rho-r)^{2}}+\f{Cr^{3}\rho^{5/3}}{(\rho-r)^{5}} \B\}
  \rho^{\f{2(5\alpha-6)}{3\alpha}}\|u\|^{2}_{\wred{L_t^{\infty}L_x^{\overrightarrow{q}}}(Q(\rho))}\nonumber\\&
  +\B\{\f{ C\rho  }{(\rho-r)^{ 2}} +\f{ C }{(\rho-r)}\B\}^{\f{4-\alpha}{2(\alpha-1)}}\rho^{2}
\|u \|^{\f{3\alpha}{2(\alpha-1)}}_{\wred{L_t^{\infty}L_x^{\overrightarrow{q}}}(Q( \rho))}\nonumber\\
&+\B\{\f{ C\rho  }{(\rho-r)^{ 2}} +\f{ C }{(\rho-r)}\B\}\rho^{\f{5\alpha-6}{\alpha}}\|u\|^{3}_{\wred{L_t^{\infty}L_x^{\overrightarrow{q}}}(Q(\rho))}+\f{3}{16}\| \nabla u\|^{2}_{L^{2}(Q(\rho))}.
 \ea $$
Now, we are in a position to apply  \wred{iteration lemma} \cite[Lemma V.3.1,   p.161 ]{[Giaquinta]} to   the latter  estimate to find that
$$\ba
&\|u\|^{2}_{\wred{L_t^{3}L_x^{\f{18}{5}}}(Q(\f{R}{2}))}+ \|\nabla u\|^{2}_{L^{2}(Q(\f{R}{2}))}\\
\leq &
  CR^{\f{3\alpha-4}{\alpha}}\| u\|^{2}_{\wred{L_t^{\infty}L_x^{\overrightarrow{q}}}(Q(R))}
  +CR^{\f{5\alpha-8}{2(\alpha-1)}}\| u\|^{\f{3\alpha}{2(\alpha-1)}}_{\wred{L_t^{\infty}L_x^{\overrightarrow{q}}}(Q(R))}
  +CR^{\f{4\alpha-6}{\alpha}}\| u\|^{3}_{\wred{L_t^{\infty}L_x^{\overrightarrow{q}}}(Q(R))}.
\ea$$
  This achieves the proof of \wred{this lemma}.
\end{proof}
		\section*{Acknowledgement}
The authors would like to express their sincere gratitude to Dr. Xiaoxin Zheng at the School
of Mathematics and Systems Science, Beihang University, for calling our attention to the
problem involving $\varepsilon$-regularity criteria in anisotropic Lebesgue spaces.
The research of Wang was partially supported by  the National Natural		Science Foundation of China under grant No. 11601492 and
the Youth Core Teachers Foundation of Zhengzhou University of
Light Industry.
\wgr{The research of Wu was partially supported by the National
Natural Science Foundation of China under grant No. 11771423 and No. 11671378.}


\begin{thebibliography}{00}
			
\bibitem{[BP]}
A. Benedek and R. Panzone. The space $L^{p}$ with mixed norm. Duke Math.
J. 28 (1961),  301-324.




			
			\bibitem{[CKN]}
			L. Caffarelli,  R. Kohn and L. Nirenberg,  Partial regularity
			of suitable weak solutions of Navier-Stokes equation, {\it Comm. Pure. Appl. Math., }   \textbf{35} (1982), 771--831.
\bibitem{[CT]}
			C.  Cao and E.  Titi, Global regularity criterion for the 3D Navier-Stokes equations involving one entry of the velocity gradient tensor, {\it Arch. Ration. Mech. Anal.} \textbf{202} (2011) 919--932.



\bibitem{[CW]}
D. Chae and J. Wolf, On the Liouville type theorems for self-similar solutions to the Navier-Stokes equations. {\it Arch. Ration. Mech. Anal.} \textbf{225} (2017),  549--572.	
\bibitem{[CW17]}
 D. Chae and J. Wolf, Removing discretely self-similar singularities for the 3D Navier-Stokes equations.  {\it Comm. Partial Differential Equations}. \textbf{  42} (2017),   1359--1374.

\bibitem{[CZ]}
J. Chemin and  P. Zhang, On the critical one component regularity for 3-D Navier-Stokes systems. {\it Ann. Sci. \'Ec. Norm. Sup \'er. }  \textbf{49} (2016),   131--167.
\bibitem{[CZZ]}
J. Chemin,  P. Zhang and Z. Zhang,  On the critical one component regularity for 3-D Navier-Stokes system: general case. {\it Arch. Ration. Mech. Anal.} \textbf{224} (2017),   871--905.






\bibitem{[CMZ1]}
Q. Chen,  C.  Miao, and Z. Zhang,  The Beale-Kato-Majda criterion for the 3D magneto-hydrodynamics equations.  {\it Comm. Math. Phys.} \textbf{275} (2007), 861--872.

\bibitem{[CMZ2]}
Q. Chen,  C.  Miao, and Z. Zhang, On the regularity criterion of weak solution for the 3D viscous magneto-hydrodynamics equations. {\it Comm. Math. Phys.} \textbf{284} (2008),   919--930.

\bibitem{[CMZ3]}
Q. Chen,  C.  Miao  and Z. Zhang, On the uniqueness of weak solutions for the 3D Navier-Stokes equations. Ann. Inst. H. Poincar\'e Anal. Non Lin\'eaire \textbf{26}  (2009),   2165--2180.


		
			\bibitem{[CWY18]}
		H.	Choe,  J. Wolf and M. Yang,  A new local regularity criterion for suitable weak solutions of the Navier-Stokes equations in terms of the velocity gradient. {\it  Math. Ann.} \textbf{370} (2018),   629--647.
 \bibitem{[CV]}
 			K. Choi and
 			A. Vasseur,  Estimates on fractional higher derivatives of
	 	weak solutions for the Navier-Stokes  equations.
 			{\it  Ann. Inst. H. Poincar\'e Anal. Non Lin\'eaire, } \textbf{31} (2014), 899--945.
			
	
\bibitem{[DW]}
H. Dong, K. Wang,
Boundary $\varepsilon$-regularity criteria for the 3D Navier-Stokes equations.
arXiv:1812.09973			
				

		\bibitem{[GSS]}
G. Galdi, C. Simader, and H. Sohr, On the Stokes problem in Lipschitz
domains, Annali di Mat. pura ed appl. (IV), 167 (1994),   147--163.
\bibitem{[GCS]}	
Z. Guo, M. Caggio  and Z. Skal\`ak,
Regularity criteria for the Navier-Stokes equations based on one component of velocity. {\it
Nonlinear Anal. Real World Appl.} \textbf{35} (2017), 379--396.
\bibitem{[GKS]}			
Z. Guo,  P. Ku\v{c}era,  Z. Skal\`ak, The application of anisotropic Troisi inequalities to the conditional regularity for the Navier-Stokes equations.  {\it Nonlinearity} \textbf{31} (2018),   3707--3725.			
	
\bibitem{[Giaquinta]}		M. Giaquinta, Multiple integrals in the calculus of variations and nonlinear elliptic systems. Annals of Mathematics Studies, 105. Princeton University Press, Princeton, NJ, 1983.
		
\bibitem{[GKT]}
			S. Gustafson, K. Kang and T. Tsai,
			Interior regularity criteria for suitable weak
			solutions of the Navier-Stokes equations,
			{\it Commun. Math. Phys.} \textbf{273}  (2007),  161--176.

 \bibitem{[GP]}
	 		C. Guevara   and N. C. Phuc,
 			Local energy bounds and $\varepsilon$-regularity criteria for the 3D Navier-Stokes system. {\it Calc. Var. } (2017) 56:68.	


 \bibitem{[GP2]}C. Guevara   and N. C. Phuc, Leray¡¯s self-similar solutions to the Navier-Stokes equations with profiles in Marcinkiewicz and Morrey spaces. {\it SIAM J. Math. Anal.,}
      50, 541-556.

			
			
			
				
			\bibitem{[He]}
			C. He,  Regularity for solutions to the Navier-Stokes equations with one velocity component regular. {\it Electr. J. Diff. Equ.} \textbf{29} 13 (2002).


\bibitem{[HWZ]}C. He, Y. Wang and D. Zhou,
New $\varepsilon$-regularity criteria and  application to the box  dimension of   the singular set
		in  the 3D Navier-Stokes equations, arxiv: 1709.01382.
 		\bibitem{[HLLZ]}		B. Han, Z. Lei, D. Li and N. Zhao, Sharp one component regularity for Navier-Stokes,   	{\it Arch. Ration. Mech. Anal.} 231 (2019),  939-970.
			
			
			
	\bibitem{[JWZ]}Q. Jiu, Y. Wang and D. Zhou,
On Wolf's regularity criterion of suitable weak solutions to the Navier-Stokes equations.  {\it J. Math. Fluid Mech.} 21 (2019), no. 2, 21:22.
	\bibitem{[KZ]}		
 I. Kukavica and  M. Ziane,  Navier-Stokes equations with regularity in one direction. {\it J. Math. Phys.} \textbf{48}  (2007), 10.		
			

			\bibitem{[KRZ]}
			I. Kukavica,  W. Rusin and   M. Ziane, Localized anisotropic regularity conditions for the Navier-Stokes equations. {\it J. Nonlinear Sci.}  \textbf{27} (2017),   1725--1742.
			
			
			
	\bibitem{[KRZ18]}
			I. Kukavica,  W. Rusin and   M. Ziane,		On local regularity conditions for the Navier--Stokes equations. To appear in Nonlinearity. 2019.	
			
			
			
			
			
			
			
			
			
			
			
			
			
			
			
			
			
			
			
			
			
			
			
			
			
			
			
			
			
			
			
			
			
			



\bibitem{[NRS]}
J. Ne\v{c}as,  M. R\r{a}u\v{z}i\v{c}ka and V. \v{S}ver\'{a}k, On Leray's self-similar solutions of the Navier-Stokes equations. {\it Acta. Math.,} \textbf{176} (1996),   283--294.
\bibitem{[NP]}
 J. Neustupa and  P. Penel,
 Regularity of a suitable weak solution
  to the Navier-Stokes equations as a consequence of
  a regularity of one velocity component.
  In: H. Beirao da Veiga,  A. Sequeira,  J. Videman,
   Nonlinear Applied Analysis. New York: Plenum Press,
    (1999), 391--402.
 		
			
	






	

		
\bibitem{[Phan1]}T. Phan,
  Liouville type theorems for 3D stationary Navier-Stokes equations in weighted mixed-norm Lebesgue
spaces, arXiv:1812.10135.


\bibitem{[Phan]}T. Phan,
Well-posedness for the Navier-Stokes equations in critical mixed-norm Lebesgue spaces, arXiv:1903.08319.

\bibitem{[Qian]}
C. Qian,   A generalized regularity criterion for 3D Navier-Stokes equations in terms of one velocity component. {\it J. Differential Equations} \textbf{260} (2016),   3477--3494.
\bibitem{[RRT]}
J.L. Rubio de Francia, F.J. Ruiz and J.L. Torrea, Calderon-Zygmund theory
for operator-valued kernels, {\it Adv. Math.} \textbf{62} (1986), 7--48.



	\bibitem{[Scheffer1]}
			V. Scheffer,   Partial regularity of solutions to the Navier-Stokes
			equations,     {\it  Pacific J. Math.,}  \textbf{66} (1976),  535--552.
			
			\bibitem{[Scheffer2]}\bysame,  Hausdorff measure and the Navier-Stokes equations,   {\it  Comm. Math. Phys.,} \textbf{55} (1977),  97--112.
	%
			

		

\bibitem{[Serrin]} J. Serrin, On the interior regularity of weak
 solutions of the Navier-Stokes equations, {\it
Arch. Rational Mech. Anal.,} \textbf{9} (1962), 187--195.

  \bibitem{[TX]}
G. Tian and Z. Xin,
Gradient estimation on Navier-Stokes equations,
		 {\it	Comm. Anal. Geom.}  \textbf{7}  (1999),  221--257.
					
\bibitem{[Tsai]}
T. Tsai, On Leray¡¯s self-similar solutions of the Navier-Stokes equations satisfying local energy estimates. {\it Arch. Ration. Mech. Anal.} 143(1), 29-51 (1998)
			
			
			\bibitem{[Ward]}
E. Ward,  New estimates in harmonic analysis for mixed Lebesgue spaces, Ph.D. Thesis, University of
Kansas (2010)
			
			
			
			\bibitem{[WZ]}
			W. Wang and Z. Zhang, On the interior regularity criteria and the number of singular points to the Navier-Stokes equations,
			 {\it J. Anal. Math.} \textbf{123} (2014), 139--170.
			
			
		
			
		
			
			



\bibitem{[WWZ]}
  Y. Wang, G. Wu and D. Zhou
A $\varepsilon$-regularity criterion without pressure of suitable weak solutions to the Navier-Stokes equations at one scale, arXiv:1811.09927

			
			
			
			
			
			
			
			
	\bibitem{[Wolf]}
J.	Wolf, A regularity criterion of Serrin-type for the Navier-Stokes equations involving the gradient of one velocity
component. {\it  Analysis} (Berlin) \textbf{35}, 259--292 (2015)


\bibitem{[Wolf1]}
J. Wolf,  On the local regularity of suitable weak solutions to the generalized Navier-Stokes equations. {\it Ann. Univ. Ferrara},  \textbf{61} (2015),  149--171.



	
\bibitem{[Zheng]}			
X. Zheng, A regularity criterion for the tridimensional Navier-Stokes equations in term of one velocity component. {\it J. Differential Equations} \textbf{256} (2014),  283--309.	




\bibitem{[ZP]}Y. Zhou and M. Pokorn\'y, On the regularity of the solutions of the Navier-Stokes equations via one velocity component, {\it Nonlinearity} \textbf{23} (2010) 1097--1107.	
		




			
			
			
			


	


 		
			
	






	

			


		


 	
		
			
	
		\end{thebibliography}
	\end{document}